\renewcommand{\P}{\mbox{\bf{P}}}
\newcommand{\defeq}{\stackrel{\rm def}{=}}
\newcommand{\prob}{\mbox{\bf{Pr}}}
\newcommand{\expect}{\mbox{\bf{Ex}}}
\newtheorem{theorem}{Theorem}
\newtheorem{fact}[theorem]{Fact}
\newtheorem{proposition}[theorem]{Proposition}
\newtheorem{corollary}[theorem]{Corollary}
\newtheorem{claim}[theorem]{Claim}
\newtheorem{lemma}[theorem]{Lemma}
\newtheorem{definition}[theorem]{Definition}
\newcommand{\bproof}{\noindent{\it Proof}}
\newcommand{\eproof}{\hspace*{\fill}$\Box$~~~~~\bigskip}
\newenvironment{proof}{\bproof. }{\eproof}
\newenvironment{psketch}{\bproof~{\it Sketch}. }{\eproof}
\title{Various proofs of the Fundamental Theorem of Markov Chains}
\author{
	Somenath Biswas\footnote{E-mail: {\tt sb@iitgoa.ac.in}}\\
School of Mathematics and Computer Science\\
IIT Goa
}
\date{}
\begin{document}
\maketitle
\begin{abstract}
This paper is a survey of various proofs of the so called {\em fundamental theorem of Markov chains}: every ergodic Markov chain has
a unique positive stationary distribution and the chain attains this distribution in the limit independent of the initial distribution the 
chain started with. As Markov chains are stochastic processes, it is natural to use probability based arguments for proofs. 
At the same time, the dynamics of a Markov chain is completely captured by its initial distribution, which is a vector, and
its transition probability matrix. Therefore, arguments based on matrix analysis and linear algebra  can also be used. 
The proofs discussed below use one or the other of these two types of arguments, except in one case where the argument is graph theoretic.\\ 
Appropriate credits to the various proofs are given in the main text.

Our first proof is entirely elementary, and yet the proof is also quite simple. The proof also suggests a mixing time bound, which
we prove, but this bound in many cases will not be the best bound. One approach in proving the fundamental theorem breaks the proof 
in two parts:\\
(i) show the existence of a unique positive stationary distribution for irreducible Markov chains, and\\
(ii) assuming that an ergodic chain does have a stationary distribution, show that the chain will converge in the limit to that 
distribution irrespective of the initial distribution.\\ 
For (i), we survey two proofs, one uses probability arguments, and the other
uses graph theoretic arguments. For (ii), first we give a coupling based proof (coupling is a probability based technique), the other 
uses matrix analysis. Finally, we give a proof of the fundamental theorem using only linear algebra concepts.
\end{abstract}
\newpage
\tableofcontents

\section{Introduction}
Every ergodic, that is, both irreducible and aperiodic, finite Markov chain has a unique positive stationary distribution and 
this distribution is attained by the chain
in the limit, starting with any initial probability distribution. This fact,
known as the fundamental theorem of Markov chains, is mainly responsible for the wide variety of applications that
Markov chains find in  diverse fields. Let us briefly mention one application: the page rank algorithm. 
Without doubt, the phenomenal success of Google started with the discovery of this algorithm. When a user makes a search in the 
Internet, the underlying database identifies a set of web pages relevant to the search query. The number of such
pages usually will be very large, possibly in hundreds of thousands. The challenge is to present to the user a certain number of
pages in the decreasing order of their relevance. The page rank algorithm solves this challenge by
viewing the set of discovered pages
as a directed graph, where the edges are the hyperlinks, going from  web pages to web pages. The algorithm considers 
this digraph  to be defining a random walk Markov
chain, the state space of which is the set of web pages discovered by the database relevant to the search query, 
and the transition probability of going from the page $p_i$ to the page $p_j$ is $k/d$,
where $d$ is the total number of hyperlinks occurring in the page $p_i$, out of which $k$ are to the page $p_j$. By adding some extra 
hyperlinks, if necessary, the page rank algorithm first makes the random walk Markov chain ergodic, and then for some $n$, identifies 
the $n$ webpages which have the $n$ highest probabilities in an appropriate approximation 
of the stationary distribution of the
random walk Markov chain. These web pages are displayed with decreasing order of their stationary probabilities. Thus, the ranking 
algorithm tries to capture the intuitive notion of ordering of relevance of web pages by their stationary probabilities in a random walk,
which appears reasonable. That there  exists a unique stationary distribution of the random walk Markov chain and we can come 
closer and closer to the stationary distribution, by doing a random walk independent of where we start the walk, is a consequence of the 
fundamental theorem which the page rank algorithm makes use of.

Like most fundamental results, this theorem too can be proved 
in various ways, this paper is a survey of a number of proofs of the fundamental theorem.
It is natural to use reasoning based on probability for proving properties of Markov chains, as they are stochastic processes. 
At the same time, the dynamics of Markov chains of concern here are entirely captured by their initial distributions,
which are vectors,  and their transition probability matrices. Therefore,  matrix analysis and linear algebra 
can also be used for dealing with Markov chains. Proofs discussed here use one or the other of these two 
types of reasoning, except in one case, in which the heart of the argument is, surprisingly, graph theoretic.   

The next section deals with the basic definitions and notations. Section 3 details a very simple and an elementary
proof of the fundamental theorem. Some proofs of the theorem splits the proof in two parts, they first establish that
a stationary distribution, in which each component is positive, exists for irreducible. The second part proves, assuming a stationary 
distribution does exist, that every initial distribution converges to that  stationary distribution, thereby proving
simultaneously, the uniqueness of the stationary probability. Section 4 deals
with this approach. Section 5 provides a proof entirely based on linear algebra.  
\section{Preliminaries}
The notations used here and some basic definitions are as follows, for greater details, we refer to \cite{haggstrom}, 
\cite{norris}, \cite{mixing-book}. 
The Markov chains
considered here have finite state spaces, are discrete time, and time-homogeneous. The symbol $\Omega$ has been generally used to
denote a state space. A Markov chain with a finite state space is called a {\em finite} Markov chain. A sequence $(X_i)_{i\geq 0}$ 
of random variables taking their values from a finite set $\Omega$ is a Markov
chain over the state space $\Omega$ if they satisify the property 
\[\Pr(X_{n+1} = i_{n+1}|X_0 = i_0, X_1 = i_1,\ldots,  X_n = i_n) = \Pr(X_{n+1} = i_{n+1}|X_n = i_n)\] for all $i_0,\ldots, i_{n+1}\in\Omega, n\geq 0$.
As we deal with only time homogeneous chains, $(X_i)_{i\geq 0}$ further satisfies: for all $i,j\in\Omega$, and for all $n\geq 0$,  
$\Pr(X_{n+1} = j|X_n = i)$ is independent of $n$, and therefore,
there is an $\Omega\times\Omega$ matrix $P$, called the {\em transition probability matrix}, or simply the {\em transition matrix}, 
such that  $\Pr(X_{n+1} = j|X_n = i = P(i,j)$, for all $i, j\in\Omega$ and for all $n\geq 0$. 
For any matrix $A$, we denote its $ij$th entry as $A(i,j)$ or as $a_{ij}$, or, as well as $A_{ij}$. 
We use the notation $A(x,\cdot)$ for the $x$th row of $A$. Clearly, for a transition matrix $P$, $P(x, \cdot)$ is the
next state probability distribution when the present state is $x$.
It is easy to see (say, by induction) that $P^k$ is the  
$k$ step transition probability matrix, that is, its $ij$th entry, $P^k_{ij}$, is the
probability of the chain moving to state $j$ at time $t+k$, given that it
is in state $i$ at time $t$.  A probability distribution $\pi$ on the state space is a {\em stationary distribution}
of the chain if $\pi P = \pi$. A matrix such as a transition matrix $P$, which has every entry $P_{ij}\in [0,1]$, and
with each row entries
summing to $1$, is called a {\em stochastic} matrix. If every entry of a matrix $A$ is positive, i.e.,$A_{ij} > 0$ for all $i,j$, 
then $A$ is called
{\em positive}, often denoted as $A > 0$. 

Let $G(P)$ denote the underlying transition graph: it is a directed graph that has a vertex for every state
and has an edge from the vertex labeled $i$, representing the state $i$, to the vertex labeled  $j$, for all $i, j\in\Omega$ 
iff $P_{ij}>0$. If $G(P)$ is strongly connected then the corresponding Markov
chain is called {\em irreducible}.  The term {\em aperiodicity} means that that for every state $s$, the gcd of the lengths
of all walks in $G(P)$, each of which starts at vertex $s$ and ends back in $s$ is $1$.  A Markov chain is {\em ergodic} if it
is both irreducible and aperiodic. 

The fundamental theorem
of Markov chains states 
\begin{theorem}[Fundamental Theorem of Markov chains] 
If a discrete time, finite, and time-homogeneous Markov chain is ergodic then  
it will have  a unique stationary distribution that assignns positive probability to every state and the chain, 
starting with any initial distribution, will attain 
the stationary distribution in the limit.\footnote{The result extends  also for
chains with denumerable state spaces.}
\end{theorem}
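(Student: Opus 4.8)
The plan is to route everything through a single analytic statement about the powers of $P$: that $P^k$ converges entrywise to a rank-one matrix $\Pi$ all of whose rows equal one common probability vector $\pi$, and then read off existence, positivity, uniqueness, and convergence from that limit. First I would prove the combinatorial lemma that ergodicity forces \emph{primitivity}, i.e. that there is an $N$ with $P^N>0$. Fix a state $s$. Irreducibility (strong connectivity of $G(P)$) supplies, for each $i$, a walk $i\to s$ of some length $a_i$ and, for each $j$, a walk $s\to j$ of some length $b_j$. Aperiodicity says the set of closed-walk lengths at $s$ has gcd $1$; since that set is closed under addition, the elementary number-theoretic fact that an additively closed set of positive integers with gcd $1$ contains all sufficiently large integers gives some $N_s$ beyond which every length is achievable by a closed walk at $s$. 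Taking $N=\max_{i,j}(a_i+b_j+N_s)$ and padding the $i\to s\to j$ route with a loop of the right length, every ordered pair $(i,j)$ has a walk of length exactly $N$, so $P^N(i,j)>0$. I expect this to be the main obstacle: it is the only place where both hypotheses are genuinely used, and it carries the hidden number-theoretic content.

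Next, working with $P$ directly, for each state $j$ I set $m_k(j)=\min_i P^k(i,j)$ and $M_k(j)=\max_i P^k(i,j)$. From $P^{k+1}(i,j)=\sum_\ell P(i,\ell)P^k(\ell,j)\ge m_k(j)\sum_\ell P(i,\ell)=m_k(j)$ (and the symmetric bound) I get that $m_k(j)$ is non-decreasing and $M_k(j)$ is non-increasing, so both converge. The analytic crux is to force the gap $M_k(j)-m_k(j)$ to $0$, and here I would use $P^N>0$: writing $\delta=\min_{i,j}P^N(i,j)>0$, expand $P^{k+N}(a,j)-P^{k+N}(b,j)=\sum_\ell\bigl(P^N(a,\ell)-P^N(b,\ell)\bigr)P^k(\ell,j)$ for the maximizing row $a$ and minimizing row $b$, bound the positive terms by $M_k(j)$ and the negative terms by $m_k(j)$, and note that the resulting coefficient equals the total-variation distance of two rows of $P^N$, which is $1-\sum_\ell\min\bigl(P^N(a,\ell),P^N(b,\ell)\bigr)\le 1-|\Omega|\delta$. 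Thus each block of $N$ steps multiplies the gap by a factor at most $1-|\Omega|\delta<1$, and by the monotonicity already established the gap decays to $0$ overall.

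Consequently each column of $P^k$ converges to a constant, i.e. $P^k\to\Pi$ with $\Pi(i,j)=\pi(j)$ for a single probability vector $\pi$; positivity is immediate since $\pi(j)\ge m_N(j)>0$. The three remaining claims then drop out formally. For stationarity, $P^{k+1}=P^kP\to\Pi P$ while also $P^{k+1}\to\Pi$, so $\Pi=\Pi P$ and hence $\pi P=\pi$. For uniqueness, any stationary $\pi'$ satisfies $\pi'=\pi'P^k\to\pi'\Pi=\pi$, using $\sum_i\pi'(i)=1$. For convergence, any initial distribution $\mu$ gives $\mu P^k\to\mu\Pi=\pi$ for the same reason. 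Finally I would record the per-block factor $1-|\Omega|\delta$, together with the number $N$ from the primitivity lemma, as an explicit geometric convergence (mixing-time) estimate, while noting that this bound is typically far from optimal.
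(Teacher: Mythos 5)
Your proposal is correct and follows essentially the same route as the paper's first proof (Section 3): establish primitivity via the number-theoretic lemma, show the columnwise minima and maxima of $P^k$ are monotone, force the gap to zero by a geometric contraction, and read off stationarity, positivity, uniqueness, and convergence from the resulting rank-one limit $\Pi$. The only real variation is your contraction constant --- you bound the gap per $N$-step block by the total-variation overlap $1-|\Omega|\delta$ of two rows of $P^N$, whereas the paper assumes $P>0$ outright and extracts a single extremal entry to get a per-step factor $1-2p_{\rm min}$ (or $1-p_{\rm min}$ in the simplified version) --- and both are valid.
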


It is easy to see that both irreducibility and aperiodicity are necessary 
for the fundamental theorem to hold: if the chain has, say, two connected components, and if the initial starting state
is one of these components, then all future states will be from the same component. Therefore, the limiting distribution
cannot be independent of the initial distribution on   states. On the other hand, if the transition graph is, say, bipartite,
violating the aperiodicity condition, then again there will not exist a stationary distribution which the chain reaches 
in the limit because, 
if the initial state at $t=0$ is in one of the components, then
the state at every even $t$ will be from this component, and for every odd $t$ the state will be from the other component.
It is truly remarkable that these two easy-to-see necessary conditions are also sufficient conditions for the fundamental
theorem to hold, which is a strong result also in the following sense: whereas the existence of a unique stationary distribution for a 
stochastic matrix that is irreducible does follow from Perron-Frobenius Theorem, the stationary distribution may not ever be 
attained,\footnote{Consider
the two state chain with the state space $\{0,1\}$, in each move, if the chain is currently in state $i$, it moves with probability
$1$ to the other state, namely the state $1-i$. This chain has the unique stationary probability distribution $[0.5 0.5]$, but the 
stationary distribution is never attained.} the fundamental theorem guarantees that for ergodic chains not only a unique stationary 
distribution exists, but also convergence to it, {\em no matter what might be the initial distribution}. 

In some of the proofs we shall need the following:
\begin{claim}
\label{positive-P}
If a finite state space, discrete time, and time homogeneous Markov chain with transition probability matrix $P$ 
is ergodic, that is, both aperiodic and irreducible, then there exists a finite $k$ such that for all $l \geq k$,
$P^l$ is positive, that is, for all $i,j$, $P^l_{ij} > 0$.
\end{claim}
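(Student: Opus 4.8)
The plan is to translate the analytic statement ``$P^l_{ij}>0$'' into a purely combinatorial one about walks in the transition graph $G(P)$, and then settle the combinatorics with a short number-theoretic lemma. The starting observation is that, since $P^l_{ij}$ is a sum of products of nonnegative transition probabilities over all length-$l$ walks from $i$ to $j$, we have $P^l_{ij}>0$ if and only if there exists a walk of length \emph{exactly} $l$ from $i$ to $j$ in $G(P)$. So the whole claim reduces to showing: there is a finite $k$ such that for every ordered pair $(i,j)$ and every $l\geq k$, a walk of length exactly $l$ from $i$ to $j$ exists.

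The heart of the argument is an elementary number-theoretic fact, which I expect to be the main obstacle. For a fixed state $s$, let $R(s)$ denote the set of lengths of closed walks from $s$ back to $s$. Concatenating two such closed walks produces another, so $R(s)$ is closed under addition, and by the aperiodicity hypothesis $\gcd R(s)=1$. The lemma I need is: \emph{if $S\subseteq\{1,2,3,\dots\}$ is closed under addition and $\gcd S=1$, then $S$ contains every sufficiently large integer} (the ``numerical semigroup'' or Schur-type fact underlying the coin problem). I would prove this by fixing some $a\in S$ and observing that the residues modulo $a$ realized by elements of $S$ form a subsemigroup, hence a subgroup, of $\mathbb{Z}/a\mathbb{Z}$; since $a\in S$ and $\gcd S=1$, that subgroup is all of $\mathbb{Z}/a\mathbb{Z}$, so every residue class contains some element $m_r\in S$, and then every integer exceeding $\max_r m_r$ lies in $S$ by adding multiples of $a$. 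Applying this to each $R(s)$ yields a bound $k_s$ with $P^l_{ss}>0$ for all $l\geq k_s$.

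It remains to handle off-diagonal pairs and to pass from per-state bounds to a single $k$. By irreducibility, for each pair $(i,j)$ there is a (simple) walk from $i$ to $j$ of some length $d_{ij}$ (with $d_{ii}=0$). Prepending this walk to a closed walk of length $m$ at $j$ gives $P^{d_{ij}+m}_{ij}\geq P^{d_{ij}}_{ij}\,P^{m}_{jj}>0$ whenever $m\in R(j)$, in particular whenever $m\geq k_j$. Hence $P^l_{ij}>0$ for all $l\geq d_{ij}+k_j$. Finally, because $\Omega$ is finite there are only finitely many pairs, so $k\defeq\max_{i,j}(d_{ij}+k_j)$ is finite, and for every $l\geq k$ and every $(i,j)$ we have $l\geq d_{ij}+k_j$, giving $P^l_{ij}>0$. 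Thus $P^l$ is positive for all $l\geq k$, as required.
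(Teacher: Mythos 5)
Your proof is correct and follows essentially the same route as the paper: reduce positivity of $P^l_{ij}$ to existence of length-$l$ walks, show the set of return-walk lengths at each state is closed under addition with gcd $1$, invoke the numerical-semigroup fact to get all sufficiently large return lengths, splice on a fixed walk between $i$ and $j$ via irreducibility, and take a maximum over the finitely many pairs. The only difference is that the paper simply cites the number-theoretic lemma (Lemma 4.1 of \cite{haggstrom}, attributed to \cite{bre}) whereas you supply a correct self-contained proof of it via the subgroup of residues modulo a fixed $a\in S$.
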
.
A proof of the Claim above is given in \cite{haggstrom}, we briefly sketch the proof idea in a somewhat different way than 
done in \cite{haggstrom}. Let $M = (X_i)_{i\geq 0}$, with state
space $\Omega$ be an ergodic 
Markov chain, and let $P$ be its transition matrix. We recall that $G(P)$ is used to denote the underlying transition graph of the 
chain. We shall use the following fact repeatedly in the proof sketch below:
\begin{fact}
\label{path-prob}
For every two vertices $u,v$ in $G(P)$, and for every walk $w$ from $u$ to $v$ of length $|w|\geq 1$ in $G(P)$, and for every $n\geq 0$,
by definition of $G(P)$, $\Pr(X_{n+|w|} = v|X_n = u) > 0$.
\end{fact}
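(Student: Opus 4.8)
The plan is to reduce the statement to an entry of a power of $P$ and then read off positivity from the expansion of that entry. First I would observe that by time-homogeneity the quantity $\Pr(X_{n+|w|}=v\mid X_n=u)$ does not depend on $n$ and is exactly the $(u,v)$ entry of the $|w|$-step transition matrix, i.e.\ $P^{|w|}_{uv}$; this is precisely the interpretation of $P^k$ recalled in the Preliminaries. So it suffices to show that $P^m_{uv}>0$, where $m=|w|$ and the walk is $u=w_0,w_1,\ldots,w_m=v$.

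The key step is the standard expansion of a matrix power,
\[
P^m_{uv} = \sum_{i_1,\ldots,i_{m-1}\in\Omega} P_{u,i_1}P_{i_1,i_2}\cdots P_{i_{m-1},v},
\]
in which every summand is a product of entries of the stochastic matrix $P$ and is therefore nonnegative. I would then single out the one summand obtained by choosing $i_t=w_t$ for each $t$, whose value is $\prod_{t=1}^{m}P_{w_{t-1},w_t}$. Because each consecutive pair $(w_{t-1},w_t)$ is by hypothesis an edge of $G(P)$, the very definition of $G(P)$ gives $P_{w_{t-1},w_t}>0$ for every $t$, so this product is strictly positive. Since none of the remaining summands can be negative, $P^m_{uv}$ is bounded below by this positive product and is hence positive, which is the desired conclusion.

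There is essentially no obstacle here: the result is immediate from the definition of the transition graph together with the identification of conditional probabilities with entries of powers of $P$. The only point needing a little care is the bookkeeping that the given walk contributes exactly one summand and that the other (nonnegative) contributions cannot cancel it. If one prefers to avoid the multi-index sum, the same conclusion follows by a trivial induction on $m$: the base case $m=1$ is the definition of an edge, and the inductive step uses $P^m_{uv}\geq P^{m-1}_{u,w_{m-1}}\,P_{w_{m-1},v}$, multiplying the positive $(m-1)$-step probability along $w_0,\ldots,w_{m-1}$ by the positive edge probability $P_{w_{m-1},v}$.
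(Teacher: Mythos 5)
Your proposal is correct, and it matches the paper's intent: the paper states this Fact without proof, asserting it as immediate from the definition of $G(P)$ together with the identification of $\Pr(X_{n+k}=v\mid X_n=u)$ with $P^k_{uv}$, which is exactly the reduction you make. Your path-sum expansion (or the equivalent induction on $|w|$) simply fills in the routine verification the paper leaves implicit, and it does so correctly.
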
 
Next,
for any state $i\in\Omega$, let $A_{ii}$, a set of positive integers, be defined as follows:
\[A_{ii} = \{|w| |w \mbox{ is a walk from } i\mbox{ to } i,\ |w|\geq 1\}\]
We note that since the chain is aperiodic, by definition, the gcd of the numbers in $A_{ii}$ is $1$. Further, the set of numbers in
$A_{ii}$ is closed under addition: if $G(P)$ has two walks $w_1,\ w_2$, each from vertex $i$ back to $i$, which means that
$|w_1,\ |w_2|\in A_{ii}$;
we can concatenate the two walks to get a longer walk from $i$ back to $i$, and therefore $|w_1|+|w_2|$ will also be in $A_{ii}$.

We can now use the following number theoretic fact (Lemma 4.1 of \cite{haggstrom}):
\begin{fact}[\cite{bre}]
Let $A = \{a_1, a_2,\ldots\}$ be a set of positive integers which is
\begin{enumerate}
\item nonlattice, meaning the gcd$\{a_1,a_2,\ldots\}=1$, and 
\item closed under addition, meaning that if $a\in A$ and $a'\in A$ then $a+a'\in A$
\end{enumerate}
then there exists an integer $N<\infty$ such that $n\in A$ for all $n\geq N$.
\end{fact}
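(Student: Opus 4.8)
The plan is to use the nonlattice hypothesis only to manufacture \emph{two consecutive integers} lying in $A$, and then let additive closure do all the remaining work by showing that two consecutive integers generate every sufficiently large integer.

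First I would reduce the gcd condition to a finite certificate. Although $A$ may be infinite, its gcd is already realized by finitely many elements: the sequence $\gcd(a_1),\ \gcd(a_1,a_2),\ \gcd(a_1,a_2,a_3),\ldots$ is a nonincreasing sequence of positive integers, hence eventually constant, and its stable value is $\gcd\{a_1,a_2,\ldots\}=1$. So I may fix finitely many elements $a_1,\ldots,a_k\in A$ with $\gcd(a_1,\ldots,a_k)=1$. By B\'ezout's identity there then exist integers $c_1,\ldots,c_k$, of mixed sign in general, with $c_1a_1+\cdots+c_ka_k=1$. Collecting positive and negative coefficients separately, set $P=\sum_{c_i>0}c_ia_i$ and $Q=\sum_{c_i<0}(-c_i)a_i$, so that $P-Q=1$. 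Since the left side equals $1>0$, at least one $c_i$ is positive, so $P$ is a nonempty sum of elements of $A$ and hence $P\in A$ by closure under addition. If no $c_i$ is negative, then $Q=0$ and $P=1$, so $1\in A$, whence every positive integer lies in $A$ as a repeated sum of $1$'s and we are done with $N=1$. Otherwise $Q\ge 1$ is likewise a nonempty sum of elements of $A$, so $Q\in A$; thus both $Q$ and $Q+1=P$ lie in $A$.

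With two consecutive integers $Q,\,Q+1\in A$ in hand, I would let them generate everything. For each $m\ge 1$, taking $m-j$ copies of $Q$ and $j$ copies of $Q+1$ shows by additive closure that $mQ+j\in A$ for every $0\le j\le m$; that is, $A$ contains the entire block $[\,mQ,\ mQ+m\,]$ of consecutive integers. The block for $m+1$ abuts or overlaps the block for $m$ exactly when $(m+1)Q\le mQ+m+1$, i.e.\ when $m\ge Q-1$. Hence for all $m\ge Q-1$ the successive blocks chain together with no gaps, and their union is the full ray $[\,Q(Q-1),\,\infty)$. Taking $N=Q(Q-1)$ then gives $n\in A$ for all $n\ge N$, as required.

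I expect the reduction and the B\'ezout splitting to be routine, with the single case distinction $Q=0$ versus $Q\ge 1$ the only thing that must not be overlooked. The genuinely delicate point is the bookkeeping in the last step: correctly pinning down the overlap threshold $m\ge Q-1$ and confirming that the blocks from that index onward leave no integer uncovered. That is where I would be most careful, but it reduces to the elementary inequality above.
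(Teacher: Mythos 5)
Your proposal is correct. Note that the paper itself gives no proof of this Fact at all --- it is quoted as Lemma 4.1 of the H\"aggstr\"om book with a citation to Br\'emaud and used as a black box --- so there is no internal argument to compare against; what you have written is the standard self-contained proof (extract a finite subfamily realizing the gcd, apply B\'ezout to produce two consecutive elements $Q$ and $Q+1$ of $A$, then chain the blocks $[mQ,\,mQ+m]$), and every step checks out: the monotone stabilization of the partial gcds, the observation that $P\geq 1$ forces some positive coefficient, the separate treatment of $Q=0$, and the overlap criterion $(m+1)Q\leq mQ+m+1\iff m\geq Q-1$. The only blemish is the degenerate case $Q=1$, where your formula $N=Q(Q-1)$ evaluates to $0$ even though the smallest block is $B_1$; since $Q=1\in A$ already puts every positive integer in $A$, this costs nothing, but it would be cleaner to state $N=\max\{1,\;Q(Q-1)\}$ (or simply fold $Q\leq 1$ into the first case). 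As a point of comparison, the explicit bound $N\leq Q(Q-1)$ your argument yields is a genuine bonus over the bare existence statement in the Fact, though of course $Q$ itself depends on the B\'ezout coefficients and need not be small in terms of the $a_i$.
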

Clearly, the above Fact applies to the set $A_{ii}$, and let $N_i$ denote the number such that for all $n\geq N_i$, 
there will be a walk of length $n$ from vertex $i$ back to $i$, which implies by Fact \ref{path-prob} that with a positive
probability the Markov chain $M$ can move from state $i$ back to state $i$ in $n$ steps. 

Next, let $j$ be an arbitrary state of the Chain $M$. As the Chain $M$ is assumed to be irreducible, there 
exists at least one path from vertex 
$i$ to $j$ in the transition graph $G(P)$, we consider one such path which is of length, say, $l_{ij}$.\\ 
Let $K_{ij}\defeq N_i + l_{ij}$. It is clear that for any $m\geq K_{ij}$, there is a walk in $G(P)$ of length $m$ from 
$i$ to $j$: first use the first $m - l_{ij}$ steps to go from $i$ back to $i$, and then use the last $l_{ij}$ steps to move
from $i$ to $j$. It follows from Fact \ref{path-prob} that for every $m\geq K_{ij}$, the $(i, j)$th entry of $P^m_{ij} > 0$. Let $K$ denote\\
$K\defeq\max\{K_{ij}|i,j\in\Omega\}$. We have therefore, that for every $l\geq K$, the matrix $P^l$ is positive. This completes our proof
sketch of the above Claim \ref{positive-P}.

Therefore, for the purpose of proving the fundamental theorem, we lose no generality by assuming the
transition probability matrix to be positive, an assumption we often make here in our proofs.

\section{A simple and elementary proof}
The first proof that we discuss is an elementary and a very simple proof, which is 
due to Alessandro Panconesi \cite{panconesi}, who in turn credits the proof idea to David Gilat.
Interestingly, the same proof idea was used by Markov himself in his original 1906 paper on Markov chains,
see Theorem 4.1 of the survey on Markov's life and his work \cite{basharin}.
The proof also implies a bound which we provide on how quickly
an ergodic Markov chain comes close to its stationary distribution; we also discuss how good the bound is. 
\subsection{The proof}
Let $P$ be the transition matrix of an ergodic Markov chain, we assume that $P$ is 
positive.
We define $P^\infty$ as 
\[P^\infty\defeq\lim_{n\rightarrow\infty}P^n\]
We prove that
\begin{proposition}
\label{matrix-P-infinity}
For any $n\times n$ positive stochastic matrix $P$, there exist $\pi_1, \pi_2,\ldots, \pi_n$, with each $i$, $0<\pi_i<1$
and $\sum_{i=1}^n\pi_i = 1$ such that
\[P^\infty = \left[\begin{array}{cccc}
      \pi_1  &\pi_2 &\ldots  &\pi_n\\ 
      \pi_1  &\pi_2 &\ldots  &\pi_n\\ 
        .    &.     &\ldots  &. \\
      \pi_1  &\pi_2 &\ldots  &\pi_n 
     \end{array}\right]\]
Further, $\pi\ \defeq\ (\pi_1,\pi_2,\ldots,\pi_n)$ is a stationary distribution of $P$, and for any
initial distribution $\sigma = (\sigma_1,\sigma_2,\ldots,\sigma_n)$, the sequence $(\sigma^{(i)})_{i\geq 1}$,
defined as $\sigma^{(1)}\defeq\sigma$ and $\sigma^{(i+1)}\defeq\sigma^{(i)}P$, reaches $\pi$ as $i$
goes to infinity.  
\end{proposition}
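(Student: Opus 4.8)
The plan is to track, for each fixed column $j$, how the largest and smallest entries of that column behave as we pass from $P^n$ to $P^{n+1}$, and to show the gap between them contracts geometrically precisely because $P$ is positive. Since $P$ is positive, set $\delta\defeq\min_{i,k}P_{ik}>0$, and for each column $j$ define $M_n\defeq\max_i(P^n)_{ij}$ and $m_n\defeq\min_i(P^n)_{ij}$. The entry $(P^{n+1})_{ij}=\sum_k P_{ik}(P^n)_{kj}$ is a convex combination of the column-$j$ entries of $P^n$, hence lies in $[m_n,M_n]$; so $M_n$ is non-increasing, $m_n$ is non-decreasing, and both, being bounded and monotone, converge.

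The heart of the argument, and the step I expect to be the main obstacle, is to show they converge to the \emph{same} limit, i.e.\ that $M_n-m_n\to 0$. To do this I would choose a row $a$ attaining $M_{n+1}$ and a row $b$ attaining $m_{n+1}$ in column $j$, and write $M_{n+1}-m_{n+1}=\sum_k(P_{ak}-P_{bk})(P^n)_{kj}$. Splitting the index set by the sign of $P_{ak}-P_{bk}$ and using that both rows of $P$ sum to $1$, this difference is bounded by $\theta(M_n-m_n)$, where $\theta=\sum_{k:\,P_{ak}\geq P_{bk}}(P_{ak}-P_{bk})=1-\sum_k\min(P_{ak},P_{bk})$. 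Since every entry of $P$ is at least $\delta$, each $\min(P_{ak},P_{bk})\geq\delta$, giving $\theta\leq 1-n\delta<1$. Hence $M_{n+1}-m_{n+1}\leq(1-n\delta)(M_n-m_n)$, and iterating forces $M_n-m_n\to 0$ geometrically.

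Writing $\pi_j$ for the common limit of $M_n$ and $m_n$, every entry of column $j$ of $P^n$ converges to $\pi_j$, which is exactly the claim that $P^\infty$ has all rows equal to $\pi\defeq(\pi_1,\ldots,\pi_n)$. Positivity is then immediate: $m_n$ is non-decreasing and $m_1\geq\delta$, so $\pi_j\geq\delta>0$; and since the $\pi_j$ sum to $1$ (each $P^n$ is stochastic, a property preserved under the limit) while $n\geq 2$, each $\pi_j<1$.

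The remaining assertions follow routinely. Passing to the limit in $P^{n+1}=P^nP$ gives $P^\infty=P^\infty P$, and reading off any row yields $\pi P=\pi$, so $\pi$ is a stationary distribution. For an arbitrary initial distribution $\sigma$, unrolling the recursion gives $\sigma^{(i)}=\sigma P^{i-1}\to\sigma P^\infty$; and because every row of $P^\infty$ equals $\pi$ while the components of $\sigma$ sum to $1$, the product $\sigma P^\infty$ equals $\pi$. Thus $\sigma^{(i)}\to\pi$ irrespective of $\sigma$, which completes the proposed argument.
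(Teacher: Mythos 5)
Your proof is correct, and its overall skeleton is the same as the paper's: track the column-wise maxima $M_n$ and minima $m_n$, show they are monotone and bounded, show the gap contracts geometrically because $P$ is positive, and then derive stationarity from $P^\infty P = P^\infty$ and convergence from $\sigma^{(i)} = \sigma P^{i-1} \to \sigma P^\infty = \pi$. The one place you genuinely diverge is the contraction step. The paper bounds $(P^{i+1})_{rk}$ from below by peeling off the single term containing the maximal entry of column $k$ of $P^i$ (and symmetrically from above by peeling off the minimal one), subtracts the two one-sided bounds, and obtains $M^{(i+1)}-m^{(i+1)} \leq (1-2p_{\rm min})(M^{(i)}-m^{(i)})$; it then has to remark that three or more states are needed to keep $1-2p_{\rm min}$ strictly between $0$ and $1$. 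You instead compare the two rows $a,b$ realizing the new max and min directly, split $\sum_k(P_{ak}-P_{bk})(P^n)_{kj}$ by the sign of $P_{ak}-P_{bk}$, and get the contraction factor $\theta = 1-\sum_k\min(P_{ak},P_{bk}) \leq 1-n\delta$. This is the Dobrushin ergodic coefficient bound; it is never worse than $1-2p_{\rm min}$ (and strictly better for $n\geq 3$), it works uniformly for all $n\geq 2$ without the paper's side condition, and it yields a correspondingly sharper mixing-time estimate in the following subsection. The price is a slightly more delicate bookkeeping step (the sign split and the identity $\sum_{k:P_{ak}\geq P_{bk}}(P_{ak}-P_{bk}) = 1-\sum_k\min(P_{ak},P_{bk})$), which you have carried out correctly. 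The remaining assertions (positivity of each $\pi_j$ via $m_1\geq\delta$, normalization, stationarity, and convergence from an arbitrary initial distribution) are handled exactly as in the paper.
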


\begin{proof}
That, for any column of $P^\infty$, every element in the column has the same value follows from the
following Claim: 
\begin{claim}
\label{P-infinity}
Let $P$ be a stochastic matrix with each entry positive. For an arbitrary fixed column, say $k$, of $P$,
let $m^{(i)}$ and $M^{(i)}$ denote respectively the minimal and the maximal entry of the column $k$ of $P^i$,
the $i$th power of $P$. Then,\\
(a) the sequence $(m^{(i)})_{i\geq 1}$ is non-decreasing,\\ 
(b) the sequence $(M^{(i)})_{i\geq 1}$ is non-increasing, and,\\
(c) $\Delta^{(i)}\defeq M^{(i)}-m^{(i)}$ goes to $0$ as $i$ goes to infinity.
\end{claim}
The following Corollary is immediate  from the Claim above. 
\begin{corollary}
\label{pi-k}
If $P$ is a positive, stochastic matrix then there exists a value, say $\pi_k$, such that each element
of the $k$th column of $P^{(i)}$ approaches $\pi_k$ as $i$ goes to infinity.
\end{corollary}
The corollary above follows from Proposition \ref{P-infinity}
because as $P$ is a positive, stochastic matrix, each $P^i$ too is positive and stochastic for all $i>0$. 
Therefore, $0 < m^{(i)}$ and $M^{(i)} < 1$, for each $i\geq 1$. As every bounded non-decreasing (or, non-increasing) sequence has
a limit, the sequences in (a) and (b) of the Proposition \ref{P-infinity} each has a limit. These two limits must be equal as (c) implies
that for any $\epsilon > 0$, there will be some $i$ such that $\Delta^{(i)} < \epsilon$. As noted by
Panconesi, (c) alone does not guarantee the Corollary; consider the case where the sequences of (a)
and (b) are identical but oscillating, say, each sequence being $((-1)^i)_{i\geq 1}$.

Now we prove Claim \ref{P-infinity}.

\begin{proof}
Proofs of (a):
\begin{align*}
m^{(i+1)}  &\defeq\min_r P_{rk}^{i+1}\\
            &=\min_r\sum_s P_{rs}P^i_{sk}\\
            &\geq\min_r\sum_s P_{rs}m^{(i)}\\
            &=m^{(i)}\min_r\sum_s P_{rs}\\
            &=m^{(i)}
\end{align*}
Thus, $m^{(i+1)}\geq m^{(i)}$, proving (a).
A proof of (b) of Claim \ref{P-infinity} can be given in a similar manner.

Proof of (c):\\
As before, our attention is on the $k$th column of the powers of the transition matrix $P$.
Consider the $rk$ entry of $P^{i+1}$, for an arbitrary $r$. As $P^{i+1} = PP^{(i)}$,
\[P^{i+1}_{rk} = \sum_l P_{rl}P^i_{lk}\]
Let a maximal entry of the $k$th column of $P^i$, viz., $M^{(i)}$, occur as the $s$th entry in
the column. Then
\begin{align*}
P^{i+1}_{rk} &= P_{rs}M^{(i)} + \sum_{l\neq s} P_{rl}P^i_{lk}\\
               &\geq P_{rs}M^{(i)} + (1-P_{rs})m^{(i)}\\
               &= m^{(i)} + P_{rs}(M^{(i)} - m^{(i)})\\
               &\geq m^{(i)} + p_{\rm min}(M^{(i)} - m^{(i)})
\end{align*}
where $p_{\rm min}$ denotes the minimum of all entries in the transition matrix $P$. (This
is positive, as $P$ is assumed to be positive.)
As the above inequality is satisfied by an arbitrary entry of the $k$th column of $P^{(i+1)}$,
it will be satisfied by $m^{(i+1)}$. Therefore, we have,
\begin{equation}\label{min-entry}
m^{(i+1)} \geq  m^{(i)} + p_{\rm min}(M^{(i)} - m^{(i)})
\end{equation}

In a similar manner, next we get an upper bound on $M^{(i+1)}$. We focus again
on an arbitrary entry of the $k$th column of $P^{(i+1)}$, this time we will take out the minimal element. 
Let a minimal entry of the $k$th column of $P^i$, viz., $m^{(i)}$, occur as the $t$th entry in
the column. Then
\begin{align*}
P^{i+1}_{rk} &= P_{rt}m^{(i)} + \sum_{l\neq t} P_{rl}P^i_{lk}\\
               &\leq P_{rt}m^{(i)} + (1-P_{rt})M^{(i)}\\
               &= M^{(i)} - P_{rt}(M^{(i)} - m^{(i)})\\
               &\leq M^{(i)} - p_{\rm min}(M^{(i)} - m^{(i)})
\end{align*}
As the above inequality is satisfied by an arbitrary entry of the $k$th column of $P^{(i+1)}$,
it will be satisfied by $M^{(i+1)}$. Therefore, we have,
\begin{equation}\label{max-entry}
M^{(i+1)} \leq  M^{(i)} - p_{\rm min}(M^{(i)} - m^{(i)})
\end{equation}

Subtracting inequality \ref{min-entry} from the inequality \ref{max-entry}, we obtain  
\[M^{(i+1)} - m^{(i+1)} \leq M^{(i)} - m^{(i)} - 2p_{\rm min}(M^{(i)} - m^{(i)})\]
That is,
\[M^{(i+1)} - m^{(i+1)} \leq (1 - 2p_{\rm min})(M^{(i)} - m^{(i)})\]
In terms of $\Delta$, we get
\[\Delta^{(i+1)} \leq (1-2p_{\rm min})\Delta^{(i)}\]
Noting that $\Delta^{(1)}\leq 1$, we get
\begin{equation}\label{exp-rate}
\Delta^{(n)} \leq (1-2p_{\rm min})^{n-1}
\end{equation}
Assuming that the state space of the Markov chain has three or
more states ensures that $0 < (1 - 2p_{\rm min}) < 1$. Thus, the
inequality \ref{exp-rate} proves that $\Delta^{(n)}$ goes ({\em exponentially fast}) to $0$ as
$n$ goes to infinity, thereby proving (c) of Claim \ref{P-infinity}.
This completes the proof of the Claim \ref{P-infinity}, which, as we have seen, establishes that
$P^\infty$ is of the form
\[\left[\begin{array}{cccc}
      \pi_1  &\pi_2 &\ldots  &\pi_n\\ 
      \pi_1  &\pi_2 &\ldots  &\pi_n\\ 
        .    &.     &\ldots  &. \\
      \pi_1  &\pi_2 &\ldots  &\pi_n 
     \end{array}\right]\]
\end{proof}

Let us now prove the remaining assertions of Proposition \ref{matrix-P-infinity}. We prove 
first that $\pi$ is a distribution. This follows from the fact that for each $i\geq 1$, $P^i$
is a stochastic matrix. The proof is by induction on $i$: suppose $P^i$ is stochastic. Consider
$P^{i+1}{\bf 1}$ where ${\bf 1}$ is the column vector of all $1$'s. 
\begin{align*}
P^{i+1}{\bf 1} &= P(P^i{\bf 1})\\
               &= P{\bf 1}\\
               &= {\bf 1}
\end{align*}
Further, for each $i$, $P^i$ remains a positive matrix as $P$ is positive. Therefore, $\pi$
is a probability distribution with full support.

Next we show that $\pi$ is a stationary distribution of $P$. Consider
\begin{align*}
P^\infty P &\defeq \lim_{i\rightarrow\infty}P^iP\\
           &=\lim_{i\rightarrow\infty}P^{i+1}\\ 
           &=\lim_{i\rightarrow\infty}P^i\\
           &\defeq P^\infty
\end{align*}
Therefore we have 

\[\left[\begin{array}{cccc}
      \pi_1  &\pi_2 &\ldots  &\pi_n\\ 
      \pi_1  &\pi_2 &\ldots  &\pi_n\\ 
        .    &.     &\ldots  &. \\
      \pi_1  &\pi_2 &\ldots  &\pi_n 
     \end{array}\right]P
= \left[\begin{array}{cccc}
      \pi_1  &\pi_2 &\ldots  &\pi_n\\ 
      \pi_1  &\pi_2 &\ldots  &\pi_n\\ 
        .    &.     &\ldots  &. \\
      \pi_1  &\pi_2 &\ldots  &\pi_n 
     \end{array}\right]\]

Which shows that 
\[[\pi_1 \pi_2\ldots \pi_n]P = [\pi_1 \pi_2\ldots \pi_n]\]
proving that $\pi$ is a stationary distribution of $P$.

Next we show that the Markov chain, started with any initial distribution $\sigma$
will reach the stationary distribution $\pi$ in the limit. Let $\sigma^{(i)}$ be as defined in 
the statement of the Proposition \ref{matrix-P-infinity}. We need to show that 
$\lim_{i\rightarrow\infty}\sigma^{(i)} = \pi$. 
This follows easily using the inductive definition of $\sigma^{(i)}$:
\begin{align*}
\lim_{i\rightarrow\infty}\sigma^{(i)} &=  \lim_{i\rightarrow\infty}\sigma P^{i-1}\\
                                      &= \sigma\lim_{i\rightarrow\infty}P^{i-1}\\
                                      &= \sigma P^\infty\\
                                      &=\pi
\end{align*}
This completes the proof of the fundamental theorem.
\end{proof}

(It will be instructive to figure out where in the proof we have made an essential use of the assumption that the chain is aperiodic.)

{\bf A simplification:} Yogesh Dahiya\footnote{As an MS student at that time, Dept. of Computer Science, IIT Kanpur.} suggested that
instead of proving the two inequalities \ref{min-entry} and \ref{max-entry} to establish (c) of Claim \ref{P-infinity},
either of the two inequalities would suffice. For example, the inequality \ref{min-entry} gives
\[m^{(i+1)} \geq m^{(i)} + p_{\rm min}\Delta^{(i)}\]  
Now, taking the limits of both the sides of the above as $i$ goes to infinity, we get
\[\lim_{i\rightarrow\infty}\Delta^{(i)}\leq 0\]
Since $\Delta^{(i)}$'s are non-negative by definition, we get 
\[\lim_{i\rightarrow\infty}\Delta^{(i)} = 0\]
thereby establishing (c) of Claim \ref{P-infinity}. Further, an exponential convergence similar to \ref{exp-rate} can
also be obtained using only either of the two inequalities:
\begin{align*}
\Delta^{(i+1)} &\defeq M^{(i+1)} - m^{(i+1)}\\
               &\leq M^{(i)} - m^{(i+1)}\\
               &\leq M^{(i)} - m^{(i)} - p_{\rm min}\Delta^{(i)}\\
               &=(1 - p_{\rm min})\Delta^{(i)}
\end{align*}
\subsection{Convergence rate as implied by the proof}
Finite Markov chains have been used for obtaining approximate solutions of $\#P$-hard counting
problems \cite{sinclair} and in combinatorial optimization problems \cite{biswas}, \cite{min-wt}. Whether the
resulting algorithms are efficient or not depends on whether or not the underlying Markov chains
come {\em close} to their stationary distributions {\em quickly}.
This issue is also of importance in a  recent application of Markov chains that models evolution 
of a population of closely related virus species with a view to design drugs  against certain diseases like AIDS \cite{nisheeth}. 
The notion of {\em mixing time} is generally used to formally capture how quickly does a Markov chain come
close to its stationary distribution. The proof of the fundamental theorem given above provides a bound
on mixing time;  we see in this section what the bound is and whether it is a good bound or not. First, we
state the necessary definitions \cite{mixing-book}. 

The {\em total variational distance} between two probability distributions $\mu$ and $\nu$ on state
space $\Omega$, denoted as $\|\mu - \nu\|_{\rm TV}$ is defined as 
\[\|\mu - \nu\|_{\rm TV}\defeq\frac{1}{2}\sum_{x\in\Omega}|\mu(x)-\nu(x)|\]
For a chain with transition probability matrix $P$, the {\em distance} $d(t)$ of the chain, after $t$ steps from the stationary distribution $\pi$, is the maximum 
of the total variational distances between $\pi$ and the distribution resulting after $t$ steps, starting the chain 
from any state of $\Omega$, that is, 
\[d(t)\defeq \max_{x\in\Omega}\|P^t(x,.)-\pi\|_{\rm TV}\]    
where $P^t(x,.)$ denotes the distribution that results after $t$ steps of starting the chain from
the state $x$. We note that the distribution $P^t(x,.)$ is the row of the matrix $P^t$ that corresponds to 
the state $x$. The {\em mixing time} within a given $\epsilon$, denoted as $t_{\rm mix}(\epsilon)$, is defined as 
\[t_{\rm mix}(\epsilon)\defeq \min\{t|\ d(t)\leq\epsilon\}\] 
{\em Mixing time} $t_{\rm mix}$ conventionally denotes the mixing time within $1/4$, i.e., $t_{\rm mix}(1/4)$.
  
The above defined $d(t)$ easily relates to $\Delta^{(t)}$ defined in Claim \ref{P-infinity}. Consider
the element $P_{ij}^t$. Fixing our attention to the column $j$, clearly, $|P_{ij}^t - \pi_j|\leq M^{(t)}-m^{(t)}$,
that is, $|P_{ij}^t - \pi_j|\leq \Delta^{(t)}$. This is true of any state $i$, therefore, $d(t) \leq n\Delta^{(t)}$, where $n$ is the number of states.

Next, for $d(t)$ to be less than or equal to $\epsilon$, it suffices to have $n\Delta^{(t)}\leq\epsilon$. In the proof of
Claim \ref{P-infinity}, we had established that
\[\Delta^{(t)} \leq (1-2p_{\rm min})^{t-1}\]
(We recall that $p_{\rm min}$ denotes the minimum entry in the (positive) matrix $P$.)
This gives us the condition 
\[t \geq \frac{1}{2p_{\rm min}}\ln \frac{n}{\epsilon}+1\Rightarrow d(t)\leq\epsilon\]  
Therefore, 
\[t_{\rm mix}(\epsilon) = O\left(\frac{1}{2p_{\rm min}}\ln \frac{n}{\epsilon}\right)\]  

The above analysis assumes the transition probability matrix $P$ to be positive. Let us consider
the general case of an ergodic Markov chain $M$ with transition probability matrix $Q$. We define $m$
as $m \defeq\min\{k\geq 1 :Q^k > 0\}$. (Ergodicity of $M$ ensures that $m$ will be defined.) After the first $m$ steps,
if we consider moves of $M$ in sequences of $m$ steps each, then actions of these move sequences are of course defined
through $Q^m$, which is a positive matrix. For the chain $M$, therefore, we have 
\begin{equation}
\label{mix-time}
t_{\rm mix}(\epsilon) = O\left(\frac{m}{2p_{\rm min}}\ln \frac{n}{\epsilon}\right)
\end{equation}  

In many applications, we would like the underlying Markov chain to be {\em rapidly mixing}. A chain is rapidly mixing
if it is the case that $t{\rm mix} = O(p(n))$ where $n$ is the length of the description of the chain, and $p(.)$ is a
fixed polynomial, that is, the chain mixing time is bounded by a fixed polynomial in the size of the input to describe the chain.
The size can of course be taken as the size of the state space. However, in many applications, the input chain is implicitly
described such that input size is logarithmic in the state space. For example, consider random walks on an $n$-dimensional
hypercube, the walk can be modeled by a Markov chain for which a state is an $n$-dimensional $0-1$ vector 
that describes the current position of the walk. The Markov chain can be described by specifying what the transitions will
be, with their probabilities, given a state. As there are only $O(n)$ neighbours of any state, the description of the
Markov chain is of size $O(n)$ whereas the state space size is $2^n$.    

Before we put the relation \ref{mix-time} to use to check rapid mixing, we need to comment upon the parameter $m$ in the
equation. Trivially, $m$ is bounded by the number of states, but in many applications where the chain state space is 
exponential in the size of its description, $m$ turns out to be polynomially bounded by the chain description. For the
hypercube random walk example, notice that $m$ is equal to $n$, as the chain has a path from any state to any other of length
$n$ or less, with paths from $[0 0\ldots 0]$ to $[1 1\ldots 1]$ being of length $n$. Similarly, for the case of card shuffle
where a move consists of picking a card from the deck uniformly at random and then placing the chosen card on top of the deck,
for a deck with $n$ cards, there are $n!$ states but we can get to any state from any other in at most $n$ moves.
Thus, for both these examples where the number of states is exponential in the (implicit) input size, $m$ is of the size of the
input, rather than of the size of the state space.

In reasonable descriptions of the input chain, the description size is no less than logarithmic in the size of the states.
Assuming such descriptions, We see from the relation \ref{mix-time} that a sufficient condition for a chain to be rapidly
mixing is that both $m$ and $\frac{1}{p_{\rm min}}$ are bounded by a fixed polynomial in the size of the chain description.
This condition immediately implies rapid mixing of random walks on a graph because $p_{\rm min}$ is no less than $1/n$ for
an $n$-vertex graph. However, the condition is too weak to prove rapid mixing in many cases. Consider the case of random
walks on $n$-dimensional hypercube. Although as we have remarked, the value of $m$ poses no problem, $p_{\rm min}$ is too small:
the probability of moving from $[0 0\ldots 0]$ to $[1 1\ldots 1]$ is $n!/n^n$. Using Stirling's approximation 
$n!\sim\sqrt{2\pi n}(n/e)^n$, we see that the probability is inverse exponential in $n$.\footnote{In order to make the chain
aperiodic, the chain is made lazy, thereby the value is further reduced by a factor of $2^n$ which is ignored for
the sake of simplicity.} Therefore, the relation \ref{mix-time}
does not prove rapid mixing of the chain, though the chain does mix rapidly, using a coupling argument, one can prove that the chain
will mix in $O(n\log n)$ steps.
  
\section{Stationarity: Existence, Convergence}
A common approach (\cite{mixing-book}, \cite{haggstrom}, \cite{mitzen}) to prove the fundamental theorem is to prove
separately the following:\\
(a) if a chain is irreducible then it has a stationary distribution in which every state has a non-zero probability,\\
(b) if $\pi$ is a stationary distribution of an ergodic chain with with transition probability matrix $P$, then starting with any
initial distribution, the chain reaches $\pi$ in the limit.

An easy consequence of (b) above is that, a stationary distribution, if it exists, is unique. (Theorem 5.3 of \cite{haggstrom}).
For proof, suppose that $\pi$ and $\pi'$ are two stationary distributions, we show that $\pi = \pi'$. Let us start the chain 
with $\pi$ as its initial distributions. Then, denoting $\pi P^n$ as $\pi^(n)$, we have from (b) that 
$\lim_{n\rightarrow\infty}\pi^(n) = \pi'$. However, as $\pi$ is stationary, $\pi = \pi P$, so,  
$\lim_{n\rightarrow\infty}\pi^(n) = \pi$. Therefore, $\pi = \pi'$.\\  
Clearly then, proving (a) and (b) together will prove the fundamental theorem.

We note first that irreducibility is both a necessary and sufficient condition for the existence of a stationary distribution. We
had seen in Section 2 the necessity, and (a) above guarantees sufficiency.
We note that the converse of (a) is not true, a trivial counter example is that of a chain with the identity matrix as
its transition probability matrix. 
Further, it is easy to see that a periodic chain can have stationary distribution: a trivial example is a chain
with state space $\{s_0, s_1\}$ and transition probabilities are given by the rule: if in state $s_i$, with probability $1$ go to
the other state, namely, $s_{1-i}$. Clearly, the underlying transition graph is bipartite, but the chain has the unique stationary 
distribution $[0.5, 0.5]$.

Actually, irreducibility guarantees not just existence, but also 
the {\em uniqueness} of the positive stationary distribution, \cite{mixing-book} (Corollary 1.17), (\cite{saloff-coste} (Lemma 1.2.2).
We briefly sketch the proof as given in $\cite{mixing-book}$ (\cite{saloff-coste} proof is essentially the same).\\
Let us call a function $h$ from the state space $\Omega$ to
$\mathbb{R}$ to be {\em harmonic} if, regarding $h$ as a column vector, we have $h = Ph$. If $P$ is the transition matrix
of an irreducible chain, then $h$ must be a constant function. The proof: since $\Omega$ is finite, there exists an 
$x_0\in\Omega$ such that $h$ is maximal at $x_0$. Let $h(x_0)$ be equal to $M$. Let $z\in\Omega$ be such that 
$P(x_0,z) > 0$ and $h(z) < M$. Now, $h$ being harmonic,
\[h(x_0) = P(x_0, \cdot)h = P(x_0,z)h(z) + \sum_{w\neq z}P(x_0,w)h(w)\]
Clearly, the RHS of the second equality is strictly less than $M$, which then gives $h(x_0) < M$, a contradiction. This implies
that $h(z) = M$. Irreducibility implies that for every $y\in\Omega$, there is some $q$ such that there is a sequence
$x_0, x_1,\ldots, x_q = y$ with $P(x_i, x_{i+1}) > 0$. Repeating the argument tells us 
that $M = h(x_0) = \ldots = h(x_{q-1} = h(y)$. Therefore, $h$ is a constant function. Now, by definition, $(P - I)h = 0$. As
$h$ is a constant (column) vector, the kernel of $(P - I)$ is $1$. Therefore, the column rank of $P - I$ is $\Omega - 1$.
As the row and the column ranks of a matrix are the same, the solution space of the matrix equation $\sigma = \sigma P$
is also one dimensional. Normalizing the solutions to have all entries of a solution summing to $1$, there will be a
unique solution to $\sigma = \sigma P$.

The following subsection deals with proving (a), and (b) is  dealt with in the next subsection.
\subsection{Existence of stationary distribution}
We provide two proofs, the first is probability based, and the other one uses only graph theoretic argument. 
Of course, linear algebra based proofs are also are there, as the one given in \cite{saloff-coste}, making 
essential use of Perron's theorem. However, since Section 5 details a complete proof of the fundamental theorem entirely
based on linear algebra and matrix analysis, which too uses Perron's theorem, we do not discuss in this subsection
linear algebra based proofs of existence of positive stationary distributions.
\subsubsection{Stationarity using probability argument}
We sketch the proof given in \cite{mixing-book}.
%
%

\begin{psketch}
The intuition is that the probability $\pi_y$ corresponding to a state $y$ in a stationary distribution is 
the fraction of time the chain spends in state $y$ in the ``long-term''. As the chain runs, if we 
consider successive sequences, each of which starts at (a specific but arbitrary) state $z$ and ends when the chain
revisits $z$ for the first time after the start of that sequence, then these sequences are identically distributed.
Therefore, the average number of times a state $y$ is visited per sequence should be proportional to
$\pi_y$.

Let $M$ be a Markov chain $(X_i)_{i\geq 0}$ with state space $\Omega$ and 
the transition probability matrix $P$. 
Let $z$ be an arbitrary state. We follow the noation in \cite{mixing-book} to  
denote $\prob_z(\cal{E})$, $\expect_z(Y)$ respectively as the probability of the event $\cal{E}$, and the expectation
of $Y$, when the initial distribution has the state $z$ with probability $1$. Also, define $\tilde{\pi_y}$ as\\
$\tilde{\pi_y}\defeq \expect_z($ number of visits to $y$ before returning to $z$ for the first time$)$.
Let $\tau^+_z$ denote $\min\{t\geq 1|X_0 = z, X_t = z\}$, i.e., the first return time to $z$. Let $I_t$ denote the indicator
variable which is $1$ if $X_t = y$ and $\tau_z^+ > t$. Then, 
\[\tilde{\pi_y} = \expect(\sum_{t=0}^\infty I_t) = \sum_{t=0}^\infty\prob_z(X_t = y, \tau^+_z>t)\]  
Next, we verify that $\tilde{\pi}$ is stationary: we show that for arbitrary $y\in\Omega$,\\
$\tilde{\pi}_y = \sum_{x\in\Omega}\tilde{\pi}_x P(x,y)$. Using the definition of $\tilde{\pi}_x$,
\begin{equation}
\label{lhs}
\sum_{x\in\Omega}\tilde{\pi}_xP(x,y) = \sum_{x\in\Omega}\sum_{t=0}^\infty\prob_z(X_t = x, \tau_z^+ > t)P(x,y)
\end{equation}
We use below the fact that the event $\tau_z^+ \geq t+1$, i.e., $\tau_z^+ > t$
is determined entirely by $X_0, X_1,\ldots, X_t$ in obtaining the third equality in the following:
\begin{align*}
&\prob_z(X_t = x, X_{t+1} =y, \tau_z^+ \geq t+1)\\ 
&= \prob_z(\tau_z^+ \geq t+1 | X_t = x, X_{t+1} = y)\prob_z(X_t = x, X_{t+1} = y)\\
&= \prob_z(\tau_z^+ \geq t+1 | X_t = x)\prob_z(X_t = x, X_{t+1} = y)\\
&= \prob_z(\tau_z^+ \geq t+1 | X_t = x)\prob_z(X_t = x) P(x,y)\\
&= \prob_z(\tau_z^+ \geq t+1, X_t = x) P(x,y)
\end{align*}
Reversing the order of summation in \ref{lhs}, and using the above,
\begin{align*}
&\sum_{x\in\Omega}\tilde{\pi}_xP(x,y)\\ 
&= \sum_{t=0}^\infty\sum_{x\in\Omega}\prob_z(X_t = x, X_{t+1} = y, \tau_z^+ > t)\\
&= \sum_{t=0}^\infty\prob_z(X_{t+1} = y, \tau_z^+ \geq t+1)\\
&= \sum_{t=1}^\infty\prob_z(X_t = y, \tau_z^+ \geq t)
\end{align*}

The right hand side of the above syntactically is almost the same as in the definition of $\tilde{\pi}_y$, in fact
with a little work it can be shown that the right hand side of the above {\em is} $\tilde{\pi}_y$, which establishes that
$\tilde{\pi}$ is stationary: $\tilde{\pi}P = \tilde{\pi}$. To obtain a probability distribution from $\tilde{\pi}$,
we normalize it by $\sum_x\tilde{\pi}_x$, which is $\expect_z(\tau^+_z)$. Thus, the distribution is $\pi$
where 
\[\pi_x = \frac{\tilde{\pi}_x}{\expect_z(\tau_z^+)}\] 
As $z$ was arbitrary, we can set $z$ to be $x$, to get
\[\pi_x = \frac{1}{\expect_x(\tau_x^+)}\]
We note that, by construction, each $\pi_x > 0$. We also note that proof above does not
need the chain to be aperiodic, though irreducibility is needed--  we have made use of the
tacit assumption that $\tilde{\pi}_x < \infty$ for all states $x$ which would not hold for a reducible
chain.
\end{psketch}

\subsubsection{Stationarity using graph theoretic argument}
The proof we give here is based on one given in \cite{rajeeva} which is the unpublished notes on Markov chains by Rajeeva Karandikar.\footnote{Karandikar 
communicated to us that his proof was adapted from \cite{freidlin-wentzell}.}

As before, let $\Omega$ be the state space and $P$ be the transition probability
matrix of a Markov chain $M$, which is assumed to be irreducible. For $x, y\in\Omega$,
we use $p_{xy}$ as an abbreviation of $P(x,y)$. 

Suppose we are able to define a $\gamma:\Omega\rightarrow\mathbb{R}^+$,
satisfying, for every $y\in\Omega$ 
\[\sum_{x\in\Omega}\gamma(x)p_{xy} = \gamma(y)\]
then $\gamma$ is stationary for $P$ (though not necessarily a distribution). The equation is above is equivalent to
\begin{align*}
\sum_{x\in\Omega, x\neq y}\gamma(x)p_{xy} &= \gamma(y) - \gamma(y)p_{yy}\\
                                          &= (1 - p_{yy})\gamma(y)\\
                                          &= \sum_{x\in\Omega, x\neq y}p_{yx}\gamma(y)
\end{align*} 
That is, we need to establish, for each $y\in\Omega$,
\begin{equation}
\label{balance-condn}
\sum_{x\in\Omega, x\neq y}\gamma(x)p_{xy} = \sum_{x\in\Omega, x\neq y}\gamma(y)p_{yx}
\end{equation}
Intuitively, the equation captures a balance condition, {\em viz.}, attaining stationarity means that for every $y$, 'flow' into $y$ equals the 'flow' out of $y$.
 
Such a $\gamma$ is defined from $G_M$, the underlying directed graph capturing $P$. $G_M$
has $\Omega$ as its set of verices, and for every pair of vertices, $x,y$, there is an edge from $x$ to $y$ of weight $p_{xy}$.
For a vertex $x$, let $\tau(x)$, a subgraph of $G_M$, be called an {\em upward spanning tree rooted at $x$}, if
$\tau(x)$ satisfies the following:
\begin{itemize}
\item $\tau(x)$ spans $G_M$: every vertex of $G_M$ occurs once and only once in $\tau(x)$,
\item From every vertex in $\tau(x)$, except $x$, there is exactly one outgoing edge, and the outdegree
of $x$ is zero.  
\item From every vertex of $y$ of $G_M$ other than $x$, there is one and exactly one path from $y$ to $x$
in $\tau(x)$.
\end{itemize}
For every vertex $x$, we define $\mathcal{T}(x)$ as
\[\mathcal{T}(x)\defeq\{\tau(x)\ |\ \tau(x)\mbox{ is an upward spanning tree rooted at } x\}\]
The weight $W(H)$ of a subgraph $H$ is defined to be the product of the weights of the edges in $G$, and the weight
of a set $S$ of subgraphs is the sum of the weights of its elements. In particular,
the {\em  weight} $W(\mathcal{T}(x)$ is the sum of the weights of the upward spanning trees in $\mathcal{T}(x)$,
where the weight of the upward spanning tree $\tau(x)$ is the product of its edge weights.
Thus, 
\[W(\mathcal{T}(x)\defeq\sum{\tau(x)\in\mathcal{T}(x)}\prod_{(y,z)\in\tau(x)}p_{yz}\] 

For each $x\in\Omega$, we define $\gamma(x)$ as
\[\gamma(x)\defeq W(\mathcal{T}(x)\]  
Clearly, the Markov chain being irreducible, for every $x\in\Omega$, there is at least one upward spanning tree
rooted at $x$, so, $\gamma(x)$ is positive. Next, 
\begin{claim}
$\gamma(x)$'s, as defined above satisfy the stationarity condition Eqn. \ref{balance-condn}.
\end{claim}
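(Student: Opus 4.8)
The plan is to read both sides of Eqn.~\ref{balance-condn} combinatorially, as weighted sums over subgraphs of $G_M$. Recall that $\gamma(x) = W(\mathcal{T}(x)) = \sum_{\tau(x)}\prod_{(u,v)\in\tau(x)}p_{uv}$. Multiplying a single tree weight $W(\tau(x))$ by the edge weight $p_{xy}$ is exactly the weight of the subgraph $\tau(x)\cup\{(x,y)\}$ obtained by attaching the edge $(x,y)$ at the root $x$. Since in an upward spanning tree rooted at $x$ the vertex $x$ has out-degree $0$ while every other vertex has out-degree $1$, the augmented subgraph has every vertex of out-degree exactly $1$; being connected on $|\Omega|$ vertices with $|\Omega|$ edges, it contains exactly one cycle, which (out-degrees all being $1$) is a directed cycle. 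I will call such a spanning subgraph --- out-degree $1$ everywhere, a single directed cycle --- a \emph{functional unicyclic graph}.

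First I would make precise the correspondence between functional unicyclic graphs and root-plus-edge pairs. Attaching $(x,y)$ to $\tau(x)$ creates the cycle $x\to y\to\cdots\to x$, whose tail is the unique tree-path from $y$ to $x$; thus $x$ is the predecessor of $y$ on the cycle. Conversely, given any functional unicyclic graph $H$ in which $y$ lies on the cycle, deleting the unique cycle-edge entering $y$, say $(x,y)$ with $x\neq y$, leaves $x$ with out-degree $0$ and destroys the cycle, so what remains is an upward spanning tree rooted at $x$. This deletion/attachment is a bijection, and it divides/multiplies weights by exactly $p_{xy}$.

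With this dictionary in hand the computation is short. For fixed $y$, the left-hand side $\sum_{x\neq y}\gamma(x)p_{xy}$ equals $\sum_H W(H)$ taken over all functional unicyclic graphs $H$ in which $y$ lies on the cycle with a predecessor $x\neq y$ --- equivalently, in which $y$ sits on a cycle of length at least $2$ --- each counted exactly once. For the right-hand side, $\gamma(y)p_{yx}=\sum_{\tau(y)}W(\tau(y))p_{yx}$ is the weight of $\tau(y)\cup\{(y,x)\}$; here $y$ is the root, so attaching $(y,x)$ yields the cycle $y\to x\to\cdots\to y$, making $x\neq y$ the successor of $y$ on the cycle. Hence $\sum_{x\neq y}\gamma(y)p_{yx}$ is again $\sum_H W(H)$ over exactly the functional unicyclic graphs in which $y$ lies on a cycle of length at least $2$, each counted once, now decomposed by deleting the cycle-edge leaving $y$ rather than the one entering it. The two families coincide, so the two sides are equal.

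The main obstacle is verifying this correspondence cleanly rather than the final identity. I need to check that deleting the cycle in-edge (respectively out-edge) at $y$ really yields a legitimate upward spanning tree --- that the remaining graph stays connected and acyclic with the correct unique root --- and that the condition ``$y$ has a predecessor $\neq y$ on the cycle'' is the same as ``$y$ has a successor $\neq y$ on the cycle,'' so that the in-decomposition and the out-decomposition range over an identical set of graphs. The only delicate case is a self-loop at $y$ (the length-one cycle $y\to y$ of weight $p_{yy}$): there the predecessor and successor of $y$ are both $y$, so such graphs are excluded from both sums by the $x\neq y$ restriction, consistent with the fact that Eqn.~\ref{balance-condn} was obtained precisely after the $p_{yy}$ terms cancelled.
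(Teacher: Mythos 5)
Your proof is correct and follows essentially the same route as the paper's: both read $\gamma(x)p_{xy}$ as the weight of the spanning subgraph $\tau(x)\cup\{(x,y)\}$ and identify the resulting collections on the two sides of Eqn.~\ref{balance-condn} as one and the same family --- what you call functional unicyclic graphs through $y$, and what the paper (quoting Karandikar) describes as the minimal spanning sets with outdegree one everywhere and a single cycle through $y$. The paper establishes the coincidence by an explicit two-way edge swap (delete the tree edge leaving $y$, attach the root edge, and conversely), which is exactly your in-edge/out-edge double decomposition in different clothing.
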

A proof of the above is as follows.\\
The LHS of the balance condition, Eqn. \ref{balance-condn}, can be seen as the weight of the following
set $A$ of subgraphs of $G_M$: $A\defeq\cup_{x\in\Omega, x\neq y, \tau(x)\in\mathcal{T}(x)}(\tau(x)\cup\{(x,y)\})$.
Similarly, the RHS of Eqn. \ref{balance-condn} can be seen as the weight of the following
set $B$ of subgraphs of $G_M$: $B\defeq\cup_{x\in\Omega, x\neq y, \tau(y)\in\mathcal{T}(y)}(\tau(y)\cup\{(y,x)\})$.
We show that the two sets, $A$ and $B$ are same, because each is contained in the other. To see that $A\subseteq B$,
consider an element of $A$, which is, for some $x, x\neq y$, an upward spanning tree rooted at $x$, say $\tau'(x)$, 
with the edge $(x,y)$ added to it. Since $\tau'(x)$ is spanning, the vertex $y$ occurs in $\tau'(x)$, and let
$z$ be the vertex that immediately follows $y$ in the unique directed path 
from $y$  to $x$ in $\tau'(x)$.  By removing the edge $(y,z)$ from $\tau'(x)$ and adding instead
the edge $(x,y)$, we get a subgraph which is an upward spanning tree now rooted at $y$. The union of this new upward
spanning tree with the edge $(y,z)$ is clearly a member of $B$, being the subgraph which is the union
of an upward spanning tree rooted at $y$ along with the edge $(y,z)$. To check the containment of $B$ in $A$, consider an element
of $B$, which is some upward spanning tree rooted at $y$, say $\tau''(y)$ along with an edge $(y,w)$ for some $w, w\neq y$.
Consider the unique path from $w$ to $y$ in $\tau''(y)$, and let $u$ be the vertex which immediately precedes $y$ in this
path. By deleting the edge $(u,y)$ from $\tau''(y)$ and adding instead the edge $(y,w)$, we get an upward spanning tree
now rooted at $u$, say, $\tau'''(u)$. This tree, along with the edge $(u,y)$, is indeed a member of $A$. Karandikar
gives a succinct description of the set $A (= B)$: it is the set of all minimally spanning sets with exactly one cycle
that contains $y$ and in which every vertex has outdegree exactly $1$.    

As $A = B$, they will have identical weights. This ends the proof of the above claim.

Defining $\pi(x)$ as $\gamma(x)\sum_{y\in\Omega}\gamma(y)$, we get a stationary distribution for the
Markov chain $M$.
  
The proof above uses irreducibility, but aperiodicity has not been assumed.

\subsection{Proofs of convergence to stationarity} 
As stated earlier, what we wish to prove here is: if an ergodic Markov chain $M$ admits a stationary distribution $\pi$ then
the chain attains $\pi$ in the limit, starting from any distribution. 
Formally, we prove:
\begin{theorem}
\label{conv-thm}
\[\lim_{n\rightarrow\infty}\|P^n(x,.)-\pi\|_{\rm TV} = 0\] for all $x\in\Omega$.
or, equivalently, $\lim_{n\rightarrow\infty} P^n(i,j) = \pi_j$ for all $i,j\in\Omega$
\end{theorem}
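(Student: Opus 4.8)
The plan is to prove this by a \emph{coupling} argument. I would run two copies of the chain side by side: a chain $(X_t)_{t\geq 0}$ started from the fixed state $x$, so $X_0 = x$ and $X_t$ has distribution $P^t(x,\cdot)$, and a second chain $(Y_t)_{t\geq 0}$ started from the stationary distribution, so $Y_0 \sim \pi$. Since $\pi$ is stationary, $Y_t \sim \pi$ for every $t$. I would couple the two chains by letting them evolve independently, each according to $P$, until the first time they occupy the same state, and from then on force them to move together. Call this coalescence time $T \defeq \min\{t\geq 0 : X_t = Y_t\}$; by construction $X_t = Y_t$ for all $t\geq T$.

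The first key step is the \emph{coupling inequality}: for any coupling of two random variables with marginals $\mu$ and $\nu$ on $\Omega$, one has $\|\mu-\nu\|_{\rm TV}\leq \Pr(\text{the two variables differ})$. Applying this to $X_n$ and $Y_n$, whose marginals are $P^n(x,\cdot)$ and $\pi$, gives
\[\|P^n(x,\cdot)-\pi\|_{\rm TV}\leq \Pr(X_n\neq Y_n)=\Pr(T>n).\]
It therefore suffices to show that $\Pr(T>n)\to 0$ as $n\to\infty$.

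The second key step is to bound the tail of $T$, and this is where ergodicity enters. I would view the pre-coalescence dynamics as a product chain on $\Omega\times\Omega$ with transition rule assigning probability $P(i,k)P(j,l)$ to the move $(i,j)\mapsto(k,l)$. By Claim \ref{positive-P} there is a finite $k$ with $P^k>0$, hence the probability that the two copies occupy a common state after $k$ independent steps, starting from any $(i,j)$, is $\sum_{z}P^k(i,z)P^k(j,z)>0$. Setting $\alpha\defeq\min_{i,j}\sum_{z}P^k(i,z)P^k(j,z)$, positivity of $P^k$ gives $\alpha>0$, and coincidence at the end of a block of $k$ steps forces $T$ to have occurred. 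Iterating this bound over $m$ disjoint blocks of length $k$, using the Markov property at block boundaries, yields $\Pr(T>mk)\leq(1-\alpha)^m$, which tends to $0$. Combined with the coupling inequality this proves $\|P^n(x,\cdot)-\pi\|_{\rm TV}\to 0$; the equivalent pointwise statement $P^n(i,j)\to\pi_j$ is then immediate, since $\Omega$ is finite and the total variation distance dominates each coordinate difference $|P^n(i,j)-\pi_j|$.

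I expect the main obstacle to be the tail bound on the coalescence time, and specifically pinning down where \emph{aperiodicity} is indispensable. Were the chain merely irreducible but periodic, two independently evolving copies could stay permanently out of phase --- one confined to even-indexed cyclic classes and the other to odd-indexed classes at matching times --- so that they never meet and the product chain fails to be irreducible. It is precisely Claim \ref{positive-P}, which requires aperiodicity, that furnishes $P^k>0$ and hence $\alpha>0$; this is the crux of the argument and the point at which the construction would collapse if ergodicity were weakened.
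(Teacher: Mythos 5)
Your proposal follows essentially the same route as the paper's coupling proof (Karandikar's argument): two copies of the chain run independently on the product space $\Omega\times\Omega$ until they meet, are then glued together, and the coupling inequality $\|P^n(x,\cdot)-\pi\|_{\rm TV}\leq\prob(T>n)$ finishes the job. Two of your choices are small improvements in presentation: starting the second copy from $\pi$ lets you compare $P^n(x,\cdot)$ with $\pi$ directly (the paper instead starts from two arbitrary states, shows the columns of $P^n$ equalize, and only then invokes stationarity of $\pi$ to identify the limit), and your block argument via Claim \ref{positive-P} gives an explicit geometric tail $\prob(T>mk)\leq(1-\alpha)^m$ where the paper settles for ``the product chain is irreducible, hence $\tau<\infty$ a.s.'' The one step you assert without justification is that after the sticking operation the second coordinate still has marginal law $\pi$ at every time. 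This is not automatic: modifying $Y$ to follow $X$ after the meeting time changes the joint process, and Rosenthal showed that sticking does \emph{not} in general preserve the law of the modified copy; it does so here only because the independent coupling is faithful. The paper devotes Claim \ref{z-chain-markov} to proving exactly this (that the stuck process $(Z_i)$ is again a Markov chain with transition matrix $P$), and your write-up needs that lemma, or an equivalent computation, for the line ``$Y_t\sim\pi$ for every $t$'' to survive the coupling construction. Your closing remark correctly locates where aperiodicity is indispensable.
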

We discuss here two types of proofs of the above, one uses coupling, the other type uses matrix analysis.

\subsubsection{Convergence using coupling}
Coupling is a technique which has been widely used in establishing upper bounds on the mixing times of Markov chains \cite{mixing-book},
\cite{mitzen}. It is therefore not surprising that this technique can be used for proving the convergence result, which is a weaker
result than proving, given any $\epsilon\geq 0$, starting from an arbitrary initial distribution, an upper bound on the number of
steps a chain will take to come $\epsilon$-close to its stationary distribution, which is what is used to establish an upper
bound on mixing time of the chain.

We first explain the notion of coupling and indicate how the notion is used in establishing upper bounds on mixing time. We
shall define coupling of a Markov chain in a manner different from the usual ones, say, 
as found in \cite{mixing-book}, and refer to relevant literature as to why our definition
is more appropriate. Actually, the coupling notion used in practice is a restricted notion called the {\em faithful coupling}, we shall 
briefly indicate why the restricted notion is used rather than the general notion as given in the \cite{mixing-book}

Next, we prove Theorem \ref{conv-thm} and the proof we shall provide, is by Karandikar, given in \cite{rajeeva}. Although it is
a coupling based proof, it assumes no knowledge of coupling, and hence can be read independently without first going through
the next two subsections.

\subsubsection{Coupling of two distributions}
Let $\mu$ and $\nu$ be two distributions on the same set, say, $\Omega$. A coupling of these two distributions is a pair $(X, Y)$ of
random variables $X$ and $Y$, defined on the same probability space, therefore, one can define a joint distribution of $X$ and $Y$.
Let $\theta$ be such a joint distribution, (clearly, $\theta$ is a distribution on $\Omega\times\Omega$). For $(X, Y)$ to be a 
coupling of $\mu$ and $\nu$, $\theta$ needs to satisfy the property that the marginal distribution of $X$ in $\theta$ is $\mu$ and the 
marginal distribution of $Y$ is $\nu$.

For the same pair of distributions, in general, there will be many couplings, i.e., there will be many joint distributions of two 
random variables $(X, Y)$ such that the marginal distribution of $X$ is $\mu$ and the marginal distribution  of $Y$ is $\nu$. For
examples, we refer to the discussion preceding Proposition 4.7 of \cite{mixing-book}. Also, there is an example given in the 
\cite{coupling-paper} of two distributions with exactly one coupling.

The importance of coupling of two distributions stems from the following fact: if $(X, Y)$ is a 
coupling of two distributions $\mu$ and $\nu$, then
$\prob(X\neq Y)$ is an upper bound of the total variational distance between $\mu$ and $\nu$, $\|\mu - \nu\|_{\rm TV}$. Further,
the infimum of all couplings is exactly the total variational distance, and this coupling, called the {\em optimal coupling}, can be
defined constructively, given any two distributions. We refer to Proposition 4.7 of \cite{mixing-book} for a proof.

\subsubsection{Coupling of a Markov chain}

Let $M$ be a Markov chain with its state space as $\Omega$, and transition matrix $P$. We define the coupling notion of $M$ in the 
following way:
a coupling of $M$ is given by an infinite sequence $(X_i, Y_i)_{i\geq 0}$ of pairs of random variables, 
each taking values from $\Omega$, and the sequence
satisfies the property that for some two distributions $\mu_0$  and $\nu_0$, both on $\Omega$, $(X_i, Y_i)$ is a coupling of $\mu_0P^i$
and $\nu_0P^i$, for all $i\geq 0$. Such a coupling can be constructed by defining a transition matrix $Q$ and a joint distribution
$\theta_0$ of
$\mu_0$ and $nu_0$, such that denoting, for all $i\geq 0$, $\theta_i$ as $\theta_0Q^i$, then $\theta_i$ is a coupling of $\mu_0P^i$ and $\nu_0P^i$.
Thus the distribution of $X_i$ is $\mu_0P^i$ and the distribution of $Y_i$ is $\nu_0P^i$, for all $i\geq 0$. Therefore, both $(X)_{i\geq 0}$,
and $(Y_i)_{i\geq 0}$ are two evolutions of the chain $M$, for certain initial distributions, but crucially, what we are {\em not}
saying is that  
$(X)_{i\geq 0}$ and
$(Y_i)_{i\geq 0}$ are Markov chains.

If we compare the definition above with the one given in \cite{mixing-book}, ours appears unnecessarily complicated, as our definition is 
in terms of specific initial distributions for the two copies of the chain, as well as it is in terms of a specific joint distribution
of the two initial distributions of the Markov chain. The reason why our definition is the appropriate definition is given in 
\cite{hirscher}, \cite{coupling-paper}.

In any case, a coupling of a Markov chain $M$ can be seen as a process $(X_i, Y_i)_{i\geq 0}$ where each of $(X_i)_{i\geq 0}$ and 
$(Y_i)_{i\geq 0}$ follows the evolution of the chain $M$. We say that such a coupling has {\em coupled at time $T$} if $X_T = Y_T$,
further we say that the coupling has the {\em now-equals-forever} property \cite{rosenthal} 
if for any $j, X_j = Y_j$, then for every $k\geq j, 
X_k = Y_k$. One way which has been used for turning a coupling $(X_i, Y_i)_{i\geq 0}$ into a coupling with the
now-equals-forever property is by replacing $(Y_i)_{i\geq 0}$ by $(Z_i)_{i\geq 0}$ where 
\begin{equation}
\label{sticking}
Z_i = \left\{ \begin{array}{ll}
       Y_i  & \mbox{ if $i \leq T$}\\
       X_i  & \mbox{ otherwise}
              \end{array} 
       \right.
\end{equation}
where $T\defeq\inf\{i|X_i = Y_i\}$.
In other words, the process $(Y_i)_{i\geq 0}$ starts following the process $(X_i)_{i\geq 0}$ once the coupling has happened. 
\cite{hirscher} called replacing $Y_i$s by $Z_i$s as the {\em sticking} operation.

Rosenthal \cite{rosenthal} proved that the sticking operation {\em does not guarantee} that the two processes  $(X_i, Y_i)_{i\geq 0}$ and
$(X_i, Z_i)_{i\geq 0}$ will be equivalent. Rosenthal then proved that if the coupling is what he defined as {\em faithful}, then the two
processes are guaranteed to be equivalent. 

If the coupling has (or made to have through sticking) the now-equals-forever property,
then it can be shown that the following holds:
\begin{lemma}
\label{coupling-lemma}
\[\|\mu_0 P^i - \nu_0 P^i\|_{\rm TV}  \leq \prob(T > i)\]
\end{lemma}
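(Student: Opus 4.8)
The plan is to combine the coupling inequality for pairs of distributions, already recorded in the excerpt, with the now-equals-forever property of the coupling. Recall that by hypothesis $(X_i, Y_i)$ is, for each fixed $i$, a coupling of the two distributions $\mu_0 P^i$ and $\nu_0 P^i$. The excerpt states as a basic fact about couplings of distributions that if $(X, Y)$ is a coupling of $\mu$ and $\nu$ then $\prob(X \neq Y)$ is an upper bound for $\|\mu - \nu\|_{\rm TV}$. Applying this fact with $\mu = \mu_0 P^i$ and $\nu = \nu_0 P^i$ immediately yields
\[\|\mu_0 P^i - \nu_0 P^i\|_{\rm TV} \leq \prob(X_i \neq Y_i).\]

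The remaining step is to bound the right-hand side by $\prob(T > i)$. Here I would invoke the now-equals-forever property: if $X_j = Y_j$ for some $j$, then $X_k = Y_k$ for all $k \geq j$. Under this property I claim the two events $\{X_i \neq Y_i\}$ and $\{T > i\}$ in fact coincide. Indeed, if $T \leq i$ then by definition there is some $j \leq i$ with $X_j = Y_j$, and now-equals-forever forces $X_i = Y_i$; conversely $X_i = Y_i$ gives $T \leq i$ directly from $T \defeq \inf\{i \mid X_i = Y_i\}$. Hence $\{X_i = Y_i\} = \{T \leq i\}$, so $\prob(X_i \neq Y_i) = \prob(T > i)$, and chaining this equality with the inequality above completes the argument.

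I do not anticipate a genuine obstacle, since the lemma is essentially a one-line consequence of two facts already in hand. The only point requiring care is the event identity $\{X_i \neq Y_i\} = \{T > i\}$: \emph{without} the now-equals-forever property one would only have the containment $\{X_i \neq Y_i\} \subseteq \{T > i\}$ (the two processes could uncouple and recouple), giving $\prob(X_i \neq Y_i) \leq \prob(T > i)$. Since the stated lemma asserts only an inequality, this weaker containment is already sufficient, and I would phrase the proof so that only this containment is invoked; the full now-equals-forever property is then needed not for the lemma itself but for the later step where one lets $i \to \infty$ and wants $\prob(T > i) \to 0$ to force convergence.
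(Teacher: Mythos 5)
The paper does not actually prove Lemma \ref{coupling-lemma}; it defers to Lemma 11.2 of \cite{mitzen} and to \cite{coupling-paper}. Your core argument is the standard proof and is correct as far as it goes: $(X_i,Y_i)$ is by definition a coupling of $\mu_0P^i$ and $\nu_0P^i$, so $\|\mu_0P^i-\nu_0P^i\|_{\rm TV}\leq\prob(X_i\neq Y_i)$, and under now-equals-forever the events $\{X_i\neq Y_i\}$ and $\{T>i\}$ coincide (with the minor caveat that when the now-equals-forever property is obtained \emph{by sticking}, one must also know that the stuck pair still has the right marginals, which is exactly the faithfulness issue the paper attributes to Rosenthal).

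However, your closing paragraph contains a genuine error, and it matters because you say you would phrase the proof so as to rely on it. You claim that \emph{without} now-equals-forever one still has $\{X_i\neq Y_i\}\subseteq\{T>i\}$. This is backwards. The containment that holds unconditionally is $\{T>i\}\subseteq\{X_i\neq Y_i\}$: if the chains have never met by time $i$ then in particular they differ at time $i$. The reverse containment $\{X_i\neq Y_i\}\subseteq\{T>i\}$, equivalently $\{T\leq i\}\subseteq\{X_i=Y_i\}$, is precisely the statement that meeting once forces agreement ever after, i.e.\ it \emph{is} the now-equals-forever property. Without it the chains can meet at some $j<i$ (so $T\leq i$) and separate again, in which case $\prob(X_i\neq Y_i)$ can strictly exceed $\prob(T>i)$ and the lemma can fail outright. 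So now-equals-forever is essential to the lemma itself, not merely to the later step where one sends $i\to\infty$; your main proof, which does invoke it, should be retained as written, and the final remark should be dropped or corrected.
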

where $T$ is as defined above, and (as we defined in the beginning of our discussion on the Markov chain coupling),
$\mu_0$ and $\nu_0$ are the two initial distributions, and $P$ is the transition matrix of the Markov chain being coupled. This follows
from Lemma 11.2 of \cite{mitzen}, the lemma is known as the {\em coupling lemma}. For an explicit proof of \ref{coupling-lemma}, we 
refer to \cite{coupling-paper}. 

Let us now indicate how coupling is used to obtain an upper bound on the mixing time. Suppose $\mu_0$ 
is the stationary distribution of $M$, say $\pi$, and let $\nu_0$ be an 
{\em arbitrary} initial distribution. By providing an upper bound on $\prob(T > i)$ for some $i$, we get an upper bound on the
variational distance between the stationary distribution and the distribution obtained after running the chain for $i$ steps. Therefore, 
given an $\epsilon\geq 0$, we can determine an 
$i$ such that $\prob(T > i)\leq\epsilon$, which will provide an upper bound on the number of steps
the chain, starting with an arbitrary distribution, needs to run so that the chain distribution comes $\epsilon$-close
to the stationary distribution. Of course, how good the bound is will depend on the coupling that is defined.

For the sake of completeness,
we give the definition of faithful coupling:
\begin{definition}[Faithful coupling]
Let $(M_i)_{i\geq 0}$ be a Markov chain $M$ with the state space $\Omega$. Then a coupling of $M$, $(X_i, Y_i)_{i\geq 0}$ is
a faithful coupling if $(X_i, Y_i)_{i\geq 0}$ is itself a Markov chain on state space $\Omega\times\Omega$ satisfying:
	\[\prob(X_{t+1} = x'|(X_t, Y_t) = (x, y)) = \prob(M_{t+1} = x'|M_t = x)\]
	\[\prob(Y_{t+1} = y'|(X_t, Y_t) = (x, y)) = \prob(M_{t+1} = y'|M_t = y)\]
for all $t\geq 0$ and for all $x, y, x', y'\in\Omega$.
\end{definition}
In terms of the matrix $Q$ defined earlier, the conditions above is equivalent to: 
that the transition matrix $Q$ satisfies the following:\\ 
for all $i, j, i', j'\in\Omega$,
\[\sum_{j'\in\Omega}Q((i,j), (i',j')) = P(i,i'),\mbox{ and }\]
\[\sum_{i'\in\Omega}Q((i,j), (i',j')) = P(j,j')\] 
Recall that $P$ is the transition matrix of the Markov chain $M$. Faithfulness appears to be a natural property, all
coupling examples in \cite{mixing-book} are said to be faithful.

\subsubsection{An coupling based proof of convergence}

We provide here a coupling based proof of the convergence theorem, the proof we provide is by Karandikar cite{rajeeva}.
\footnote{Karandikar informed us that his proof is adapted from a proof given in \cite{bill}} Although the proof is coupling
based, the proof assumes no background knowledge of coupling, it develops all the necessary concepts from the first principles. 
Such a 'simple' coupling based proof was possible because the coupling used is the trivial coupling where two independent copies
of a chain are coupled. The discerning reader will see that the proof indeed uses the concepts etc. we discussed in the last subsection.
{\em These connections are pointed out in remarks which are given in parentheses, these parenthetical remarks are not necessary
for understanding the proof.}

Let $M$ be a Markov chain with state space $\Omega$ and transition matrix $P$. $M$ is assumed to be irreducible and aperiodic. 
Let $\pi$ be a stationary probability of of $M$. We prove convergence to $\pi$ by showing that for all $i,j\in\Omega$,\\ 
$\lim_{n\rightarrow\infty}P^n(i,j) = \pi_j$\\
Let $(X_i)_{i\geq 0}$ and
$(Y_i)_{i\geq 0}$ be two independent copies of $M$, that is,\\ 
for all $i,i,j,j'\in\Omega$, $\prob(X_i=i', Y_j=j') = \prob(X_i=i')\prob(Y_j=j')$\\
Consider now $N = (X_i, Y_i)_{i\geq 0}$. (Clearly, $N$ defines a coupling of the simplest kind, called an {\em independent coupling}
which can easily shown to be a faithful coupling.)

Using independence, it is easy to prove that
\begin{itemize}
\item $N$ is a Markov chain with state space $\Omega\times\Omega$,
\item the transition matrix of $N$ is given by $Q$ where for all $i,j,k,l\in\Omega, Q((i,k),(j,l) = P(i,j)\times P(k,l)$,
\item $N$ is aperiodic and irreducible.\footnote{The chain $N$ will not be irreducible unless $M$ is both aperiodic and irreducible.
The proof below will break down if $N$ is not irreducible. This is where we make essential use of ergodicity of $M$. We are grateful to
Rajeeva Karandikar for pointing this out to us.}
\end{itemize}

Let $t\in\Omega$ be any fixed state of $M$. Because the chain $N$ is irreducible, for any arbitrary $i,j\in\Omega$, with $X_0 = i$
and $Y_0 = j$, there will be infinitely many finite $n$'s such that $X_n = t$ and $Y_n = t$. Let $\tau$ denote the least of these $n$'s.

We define a new sequence of random variables
$(Z_i)_{i\geq 0}$ where 
\begin{equation}
\label{karandikar-sticking}
Z_i = \left\{ \begin{array}{ll}
       Y_i  & \mbox{ if $i \leq \tau$}\\
       X_i  & \mbox{ otherwise, that is, for all $i > \tau$}
              \end{array} 
       \right.
\end{equation}
(This sticking operation turns the chain $N$ to a new chain which has now-equals-forever property.)\\
\begin{claim}
\label{z-chain-markov}
$(Z_i)_{i\geq 0}$ is a Markov chain with transition matrix $P$. 
\end{claim}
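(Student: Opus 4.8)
The plan is to prove the stronger and cleaner statement that the spliced process $(Z_i)_{i\geq 0}$ has exactly the same law as $(Y_i)_{i\geq 0}$; since $(Y_i)_{i\geq 0}$ is by construction a copy of $M$, hence a Markov chain with transition matrix $P$, and since being Markov with a prescribed transition matrix is a property of the law of the process, this equality of laws immediately yields Claim~\ref{z-chain-markov}. The whole difficulty is thus concentrated in justifying the equality in distribution, and the reason it should hold is that $Z$ follows $Y$ up to the random time $\tau$ and then switches to $X$ at a moment when, by the very definition of $\tau$, we have $X_\tau = Y_\tau = t$; the switch therefore introduces no discontinuity in the dynamics.

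First I would record that $\tau$ is a stopping time for the joint chain $N = (X_i, Y_i)_{i \geq 0}$ --- it is the first hitting time of the diagonal state $(t,t)$ --- and that $\tau < \infty$ with probability $1$, which is precisely the irreducibility of $N$ noted just above the claim. Next I would condition on the event $\{\tau = k\}$ together with the history $\mathcal{F}_k \defeq \sigma(X_0, Y_0, \ldots, X_k, Y_k)$, observing that $\{\tau = k\}$ is $\mathcal{F}_k$-measurable so that this conditioning is legitimate. The key step is then the (elementary, discrete-time) strong Markov property applied to the product chain: given $\mathcal{F}_k$, the two tails $(X_{k+1}, X_{k+2}, \ldots)$ and $(Y_{k+1}, Y_{k+2}, \ldots)$ are conditionally independent and, because $X_k = Y_k = t$ on $\{\tau = k\}$, each is distributed as a copy of $M$ started from the state $t$. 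This invokes only the ordinary Markov property of each of the independent copies $X$ and $Y$ together with their mutual independence.

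With this in hand the argument closes quickly. On $\{\tau = k\}$ the initial segment $(Z_0, \ldots, Z_k)$ equals the observed segment $(Y_0, \ldots, Y_k)$, while the continuation of $Z$ is the tail of $X$ and the continuation of $Y$ is the tail of $Y$. Since both tails have the identical conditional law --- a copy of $M$ from $t$ --- and are conditionally independent of the common past, the conditional law of $(Z_i)_{i\geq 0}$ given $\{\tau = k\} \cap \mathcal{F}_k$ coincides with that of $(Y_i)_{i\geq 0}$; interchanging the label ``$X$-tail'' for ``$Y$-tail'' simply does not affect the distribution. Summing over $k$ and averaging over $\mathcal{F}_k$ gives $(Z_i)_{i\geq 0} \stackrel{d}{=} (Y_i)_{i\geq 0}$, and the claim follows.

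I expect the main obstacle to be the careful bookkeeping around the random switching time rather than any substantive difficulty: one must verify that the continuation grafted onto $Z$ (the tail of $X$) really is exchangeable with the continuation of $Y$, and this exchangeability is exactly what the definition of $\tau$ --- which forces $X_\tau = Y_\tau = t$ --- is designed to guarantee. A more pedestrian alternative would verify the one-step Markov property of $Z$ directly, computing $\Pr(Z_{n+1} = b \mid Z_0 = a_0, \ldots, Z_n = a_n)$ by splitting according to the value of $\tau$ relative to $n$; this avoids naming the strong Markov property but trades it for a heavier case analysis, since the observed path of $Z$ alone does not reveal whether the switch has already occurred.
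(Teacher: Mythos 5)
Your argument is correct and is essentially the proof the paper gives: both establish that $(Z_i)_{i\geq 0}$ has the same law as $(Y_i)_{i\geq 0}$ by decomposing over the event $\{\tau = m\}$ and using the Markov property at that time together with the independence of the two copies (and the fact that $X_\tau = Y_\tau = t$) to show that the grafted $X$-tail has the same conditional law as the $Y$-tail it replaces. The paper merely carries this out as an explicit computation with finite-dimensional probabilities $\prob(Z_0=i_0,\ldots,Z_n=i_n,\tau=m)$ rather than invoking the strong Markov property of the product chain by name.
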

\begin{proof}
The Claim is proved by showing that for any $n\geq 0$,  and for all $i_0, i_1,\ldots, i_n$, where each of the $i$s is in $\Omega$,
\begin{equation}
\label{karand-1}
\prob(Y_0=i_0,\ldots, Y_n=I_n) = \prob(Z_0=i_0,\ldots, Z_n=i_n)
\end{equation}
Proving the equality establishes the Claim is because $(Z_i)_{i\geq 0}$ behaves exactly the same way 
as $(Y_i)_{i\geq 0}$ does which we know to be a Markov chain.
For a more formal argument, one can invoke Theorem 1.1.1 of \cite{norris}.
In turn, it suffices to prove that for all $m\geq 0$,
\begin{equation}
\label{karand-2}
\prob(Y_0=i_0,\ldots, Y_n=I_n, \tau=m) = \prob(Z_0=i_0,\ldots, Z_n=i_n, \tau=m)
\end{equation}
We note that if $m\geq n$ then \ref{karand-2} is true by definition of $(Z_i)_{i\geq 0}$. On the other hand, if $n > m$ and if 
$i_m\neq t$, then both LHS and RHS of \ref{karand-2} are zero. Therefore, the only interesting case to be considered is when
$n > m$ and $i_m = t$. In that case, the RHS of \ref{karand-2} is then
\[\prob(Z_0=i_0,\ldots, Z_n=i_n, \tau=m)\]
Using the definition of $Z$, the above is equal to
\[Y_0=i_0,\ldots, Y_m=t, \tau=m, X_{m+1}=i_{m+1},\ldots, X_n=i_n)\]
Using independence of $X_i$'s and $Y_i$'s, we get\\
$\prob(Y_0=i_0,\ldots, Y_m=t, \tau=m)\times\prob(X_{m+1}=i_{m+1},\ldots, X_n=i_n|X_m=t)$\\
Now, using the fact that, by definition, if $\tau = m$, then $X_m = Y_m = t$, we get from the above  
$=\prob(Y_0=i_0,\ldots, Y_m=t, \tau=m)\cdot p_{t,m+1}\cdots p_{n-1,n}$\\
The LHS of \ref{karand-2} can, a little more easily, be proved to be the same expression as above. Hence, $(Z_i)_{i\geq 0}$ is
also a Markov chain with $P$ as its transition matrix, being identical to $(Y_i)_{i\geq 0}$, which proves the 
Claim \ref{z-chain-markov}. 
\end{proof}

(Because the coupling used here is the simplest, independent coupling, the independence of the two chains $(X_i)_{i\geq 0}$
and $(Y_i)_{i\geq 0}$ made the proof above quite easy. For a general faithful coupling, such a proof, somewhat more complex,
can be found in \cite{rosenthal}, \cite{coupling-paper}.)

Finally, we are ready to prove convergence by showing that in $P^n$ as $n$ tends to infinity, each column entry for any $k\in\Omega$
has the same value $\pi_k$, the $k$th component of the stationary distribution $\pi$ which was assumed to exist for $P$.

We use the following notation: for any event $A$, and for any $i,j\Omega$, $\prob_{i,j}(A)$ will denote $\prob(A|X_0=i, Y_0=j)$,
or equivalently, $\prob(A|X_0=i, Z_0=j)$. When $P$ is a transition matrix, we recall that the $(i,j)$th entry of the matrix 
$P^n$ is denoted by $p^{(n)}_{ij}$. For the Markov chain $M$ as $P$ its transition matrix, we know that\\
$p^{(n)}_{jk} = \prob_{ij}(Y_n = k) = \prob_{ij}(Z_n = k)$. Therefore,

\begin{align*}
|p_{ik}^{(n)} -p_{jk}^{(n)}|\\ 
&= |\prob_{ij}(X_n = k) - \prob_{ij}(Z_n = k)|\\
&= |\prob_{ij}(X_n=k, Z_n\neq k) + \prob(X_n=k, Z_n=k)\\
&- \prob_{ij}(Z_n = k,X_n=k) + \prob_{ij}(Z_n=k, X_n\neq k)|\\
&= |\prob_{ij}(X_n=k, Z_n\neq k) - \prob(Z_n=k, X_n\neq k)|\\
&\leq \prob_{ij}(X_n=k, Z_n\neq k) + \prob_{ij}(Z_n=k, X_n\neq k)\\
&\leq \prob_{ij}(X_n \neq Z_n) = \prob_{ij}(\tau > n)
\end{align*}
We have already argued that the chain $(X_i, Y_i)_{i\geq 0}$, that is $(X_i, Z_i)_{i\geq 0}$, is irreducible, and therefore,
for any $i,j\in\Omega$, $\tau$ is finite. Therefore,
\[\lim_{n\rightarrow\infty} |p^{(n)}_{ik} - p^{(n)}_{jk}| \leq \prob_{ij}(\tau > n) = 0\]
From the above, we conclude that $\lim_{n\rightarrow\infty} |p^{(n)}_{ik} - p^{(n)}_{jk}| = 0$\\
Since the limit is zero, we the above is equivalent to
\begin{equation}
\label{karand-3}
\lim_{n\rightarrow\infty} (p^{(n)}_{ik} - p^{(n)}_{jk}) = 0
\end{equation}
This shows that, in the limit $n$ tending to infinity, all entries in any column $k$ of $P^n$ will have all its entries identical. Next,
we show that this value is $\pi_k$, the $k$th entry of the of $\pi$, the stationary distribution of the chain $M$ the existence of
which we have assumed.
As $\pi$ is a stationary distribution for the chain $M$, $\pi P^n = \pi$ for all $n\geq 0$. Therefore,\\
$\sum_{i\in\Omega}\pi_ip^{(n)}_{ik} = \pi_k$, the $k$th component of $\pi$.\\
Now consider
\begin{equation}
\label{karand-4}
	\sum_{i\in\Omega}\pi_i(p^{(n)}_{ik} - p^{(n)}_{jk}) = \pi_k - p^{(n)}_{jk}
\end{equation}
Using Equation \ref{karand-3}
\begin{equation}
\label{karand-5}
\lim_{n\rightarrow\infty}\sum_{i\in\Omega}\pi_i(p^{(n)}_{ik} - p^{(n)}_{jk}) = 0
\end{equation}
Using Equations \ref{karand-4} and \ref{karand-5}, we get the result we wanted to prove:
\[\lim_{n\rightarrow\infty}(\pi_k - p^{(n)}_{jk}) = 0\]
Further, the proof also shows that the stationary distribution is unique, as 
$\lim_{n\rightarrow\infty}p^{(n)}_{jk}$ will be a unique value.

\subsubsection{Convergence through matrix analysis}
First proof:\\
We consider first the proof given in \cite{mixing-book} \footnote{a similar proof is there in \cite{saloff-coste}} and provide the proof idea.
Let $\pi$ be a stationary distribution for $P$
and let $\Pi$ be (as in Section 2) the square matrix each row of which is $\pi$. First, we note
\begin{fact}
\label{pi-fact}
(a) $M\Pi = \Pi$ holds for any stochastic matrix $M$, and
(b) $\Pi M = \Pi$ holds for any stochastic matrix $M$ for which $\pi$ is a stationary distribution.
\end{fact}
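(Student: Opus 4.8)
The plan is to verify both identities entrywise, exploiting the single structural feature of $\Pi$ that makes everything work: since every row of $\Pi$ equals $\pi$, we have $\Pi_{ij} = \pi_j$ for all $i$, so the $(i,j)$ entry of $\Pi$ depends only on the column index $j$. Equivalently, each column of $\Pi$ is a constant vector. Both parts then reduce to a one-line computation of a matrix product, and the two parts differ only in which hypothesis on $M$ I invoke.

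For part (a) I would fix arbitrary indices $i,j$ and compute
\[(M\Pi)_{ij} = \sum_{k} M_{ik}\Pi_{kj} = \sum_{k} M_{ik}\pi_j = \pi_j\sum_{k} M_{ik} = \pi_j,\]
where the final equality uses that $M$ is stochastic, so its $i$th row sums to $1$. Since $\pi_j = \Pi_{ij}$, this gives $(M\Pi)_{ij} = \Pi_{ij}$ for all $i,j$, hence $M\Pi = \Pi$. Note that this argument needs nothing about $\pi$ beyond its being the common row of $\Pi$; row-stochasticity of $M$ alone suffices.

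For part (b) I would again fix $i,j$ and compute
\[(\Pi M)_{ij} = \sum_{k} \Pi_{ik} M_{kj} = \sum_{k} \pi_k M_{kj} = \pi_j,\]
where now the last equality is precisely the stationarity condition $\pi M = \pi$, read off in the $j$th coordinate as $\sum_{k} \pi_k M_{kj} = \pi_j$. Again $\pi_j = \Pi_{ij}$, so $\Pi M = \Pi$.

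There is no real obstacle here; the only thing to keep straight is the asymmetry between the two parts. Multiplying $\Pi$ on the left by $M$ (part (a)) is absorbed by the row sums of $M$ and works for every stochastic matrix, whereas multiplying on the right (part (b)) genuinely requires $\pi$ to be a stationary distribution of $M$. Conceptually these are exactly the two facts that will later allow one to decompose $P^n$ into $\Pi$ plus a remainder, so it is worth recording both even though each is immediate.
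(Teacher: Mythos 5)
Your proof is correct: the paper states this Fact without proof, and your entrywise computation — using that every column of $\Pi$ is constant, so left-multiplication is absorbed by the row sums of $M$ while right-multiplication reduces to the stationarity equation $\pi M = \pi$ — is exactly the verification the paper implicitly relies on. Your remark on the asymmetry between the two parts (only (b) needs stationarity) is accurate and worth keeping.
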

As $P$ is positive, there exists a $\delta$ strictly between $0$ and $1$ such that for every 
pair $x, y$, $P(x,y) \geq \delta\Pi(x,y)$ holds. Let $\theta \defeq 1 - \delta$. A key ingredient of
the proof is to define a stochastic matrix $Q$ through the equation
\begin{equation}
P = (1 - \theta)\Pi + \theta Q
\end{equation}
The above gives
\begin{equation}
\label{error-eq}
P - \Pi = \theta(Q - \Pi)
\end{equation}                                                
The left hand side of the above is the ``error'' to begin with. This error reduces
exponentially as we power $P$; making crucial use of Fact \ref{pi-fact}, we prove
that for all $n \ geq 1$
\begin{equation}
\label{n-error-eq}
P^n - \Pi = \theta^n(Q^n - \Pi Q^{n-1})
\end{equation}
The proof is by induction, let us show the induction step. Suppose we have for some $k$ 
\[P^k - \Pi = \theta^k(Q^k - \Pi Q^{k-1})\]
We post-multiply the two sides of the above by the two sides of (\ref{error-eq}), and then simplify
using Fact \ref{pi-fact}:
\begin{align*}
(P^k - \Pi)(P - \Pi) &= \theta^k(Q^k - \Pi Q^{k_1})\theta(Q - \Pi)\\
P^{k+1} -P^k\Pi - \Pi P + \Pi^2 &= \theta^{k+1}(Q^{k+1} - Q^k\Pi - \Pi Q^k + \Pi (Q^{k-1}\Pi)\\
P^{k+1} - \Pi        &= \theta^{k+1}(Q^{k+1} - \Pi - \Pi Q^k + \Pi^2)\\
P^{k+1} - \Pi &= \theta^{k+1}(Q^{k+1} - \Pi Q^k)
\end{align*}
This proves the induction step.
In the derivation above, besides Fact \ref{pi-fact}, we have also used the fact that the product of two
stochastic matrices is also stochastic, and therefore, any power of a stochastic matrix is also stochastic.
Also, for a matrix $Q$, we have taken $Q^0$ by definition to be the identity matrix $I$.

The last step is to consider the $x$th row of of the resultant matrix on each side  of
\[P^n - \Pi = \theta^n(Q^n - \Pi Q^{n-1})\]
summing the absolute values of the elements on each side, then dividing by two to get
\[\|P^n(x,.) - \pi\|_{\rm TV} = \theta^n\|Q^n(x,.) - \Pi Q^{n-1}(x,.)\|_{\rm TV}\]
Noting that $1$ is the largest value that a total variational distance can take, we get 
\[\|P^n(x,.) - \pi\|_{\rm TV} \leq \theta^n\]
This completes the proof sketch of the Convergence theorem.

Second proof that uses an interesting matrix norm\\

\section{Linear algebra proof of the Fundamental Theorem}
The proof we provide makes use of Perron's theorem of 1907, a result that applies to all
real, positive square matrices. Perron's result was (somewhat weakly) extended to 
real, non-negative square matrices by Frobenius in 1912. Perron-Frobenius results have
many applications, we refer to \cite{maccluer} for a survey. The proof below essentially details
the proof sketch given there.
 
\begin{proof}
The statement of Perron's theorem is:
\begin{theorem}[(Perron)]
Let $A$ be a positive real square matrix. The largest eigenvalue $\lambda$ of $A$ is real,
with algebraic (and therefore, geometric) multiplicity of $1$, and with an associated
eigenvector which is both real and positive. All other eigenvalues of $A$ are strictly smaller than 
$\lambda$ in absolute value.
\end{theorem}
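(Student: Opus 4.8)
The plan is to obtain the dominant eigenvalue and its eigenvector through the variational (Collatz--Wielandt) characterization, and then to pin down the multiplicity and strict dominance separately. For a nonzero vector $x \geq 0$ (meaning every component is nonnegative), define
\[ r(x) \defeq \min_{i : x_i > 0} \frac{(Ax)_i}{x_i}, \]
which is the largest $s$ with $Ax \geq sx$. I would set $\lambda \defeq \sup r(x)$ over all such $x$ and first argue that this supremum is attained. The difficulty is that $r$ is not continuous on the whole nonnegative orthant; I would circumvent this by noting that $Ax > 0$ strictly whenever $x \geq 0$ is nonzero (here positivity of $A$ is essential), that $r(Ax) \geq r(x)$, and that $x \mapsto r(Ax)$ is continuous on the compact simplex $\{x \geq 0 : \sum_i x_i = 1\}$. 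Hence the supremum equals the maximum of a continuous function on a compact set and is attained, say at $v$.

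Next I would verify the basic Perron properties. If $Av \neq \lambda v$ then $Av - \lambda v \geq 0$ is nonzero, so applying $A$ gives $A(Av) > \lambda(Av)$, i.e. $r(Av) > \lambda$, contradicting maximality; hence $Av = \lambda v$. Since $v = \lambda^{-1} Av$ and $Av > 0$, the eigenvector $v$ is strictly positive, and testing with the all-ones vector shows $\lambda \geq \min_i \sum_j A_{ij} > 0$. To see $\lambda$ dominates every eigenvalue $\mu$, with eigenvector $w$, take absolute values componentwise in $Aw = \mu w$: the triangle inequality yields $A|w| \geq |\mu|\,|w|$, so $r(|w|) \geq |\mu|$ and therefore $\lambda \geq |\mu|$. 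For \emph{strict} dominance, suppose $|\mu| = \lambda$; then $|w|$ is itself a maximizer, hence a strictly positive eigenvector, and equality must hold in every triangle inequality $|\sum_j A_{ij} w_j| = \sum_j A_{ij}|w_j|$. Because each $A_{ij} > 0$, this forces all components $w_j$ to share a common complex argument, so $w$ is a scalar multiple of $|w|$ and $\mu = \lambda$. Thus all other eigenvalues are strictly smaller in absolute value.

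It remains to establish that the multiplicity is $1$. For the geometric multiplicity I would take any real eigenvector $u$ for $\lambda$ and, using $v > 0$, choose the largest $t$ with $v - tu \geq 0$ (replacing $u$ by $-u$ if necessary); this vector is again a $\lambda$-eigenvector but has a zero component, and since any nonzero nonnegative $\lambda$-eigenvector must in fact be strictly positive (the same $v = \lambda^{-1} Av$ argument), it must be the zero vector, forcing $u$ to be a multiple of $v$. The algebraic multiplicity is the delicate step: I would rule out generalized eigenvectors. Applying the whole argument to the positive matrix $A^T$ produces a strictly positive \emph{left} eigenvector $\pi$ with $\pi^T A = \lambda \pi^T$. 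If some $z$ satisfied $(A - \lambda I)z = v$ (the only obstruction to simplicity once the geometric multiplicity is $1$), then pairing with $\pi$ gives $\pi^T(A - \lambda I)z = 0$ on the left but $\pi^T v > 0$ on the right, a contradiction. Hence no generalized eigenvector exists and the algebraic multiplicity equals the geometric multiplicity, namely $1$.

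I expect the main obstacle to be this last step, the simplicity of $\lambda$ as a root of the characteristic polynomial: geometric simplicity and strict spectral dominance both follow fairly directly from the variational setup, but excluding a Jordan block of size $2$ genuinely requires the auxiliary left eigenvector of $A^T$ and the observation that $\pi$ is strictly positive. The continuity fix for $r$ in the very first step is the other place where care is needed, and it is exactly there that the strict positivity of $A$ (rather than mere nonnegativity, as in the Frobenius extension) is used in an indispensable way.
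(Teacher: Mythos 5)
Your proof is correct, but it cannot be compared line-by-line with the paper's, because the paper does not prove Perron's theorem at all: it imports it as a black box, pointing the reader to the survey of MacCluer and to Suzumura for proofs, and remarking in its conclusion that the standard route goes through Gelfand's spectral radius formula. What you have written is the classical Collatz--Wielandt argument, and it is sound in every step: the continuity repair via $x \mapsto r(Ax)$ on the simplex correctly circumvents the discontinuity of $r$ at the boundary of the orthant; the bootstrapping $A(Av - \lambda v) > 0$ correctly forces $Av = \lambda v$; the triangle-inequality rigidity correctly gives strict spectral dominance; and the pairing of a putative generalized eigenvector against the positive left Perron vector of $A^T$ is exactly the standard (and, as you say, unavoidable) device for upgrading geometric to algebraic simplicity. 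Two cosmetic points: the maximum you exhibit is attained by $r(A\,\cdot)$ at some $x_0$ on the simplex, so the vector realizing $r(v)=\lambda$ is $v = Ax_0$ rather than $x_0$ itself (harmless, since you only use that some nonnegative $v$ attains the supremum); and in the geometric-multiplicity step you should note that, $\lambda$ and $A$ being real, the $\lambda$-eigenspace is spanned by real vectors, so restricting to real $u$ loses nothing. What your approach buys relative to the paper's citation-only treatment is a self-contained, entirely elementary proof that makes the linear-algebra route of Section 5 genuinely complete, at the cost of about a page of argument; the paper's choice keeps Section 5 short but leaves its key ingredient unproved.
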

Let $M$ be a finite ergodic Markov chain with $\Omega$ as its set of states, and $P$ as its transition probability matrix. Without loss
of generality, we assume $P$ to be positive. We note that $1$ is an eigenvalue of $P$, because $P{\bf 1} = {\bf 1}$ as $P$ is stochastic.
In fact, we show next that $1$ is the largest real eigenvalue of $P$. Suppose otherwise, let $\lambda > 1$ be the largest real eigenvalue of $P$.
As the spectra of $P$ and its transpose $P^T$ are same, $\lambda$ is the largest real eigenvalue of $P^T$ as well. Perron's theorem says
that there will be a positive eigenvector corresponding to $\lambda$, let this eigenvector, after normalization so that its components add to $1$,
be\footnote{We remind that,  as per the non-standard convention we are following here, $\mu$ denotes a row vector.} $\mu^T$. Then,
$P^T\mu^T = \lambda\mu^T$. Taking transposes, we get $\mu P = \lambda\mu$. Thus, $P$ transforms a probability distribution to 
something which is not a probability distribution. This is a contradiction because $P$, being stochastic, always transforms probability
distributions to probability distributions: suppose  $\rho$ is a probability distribution on $\Omega$ then so is
$\sigma \defeq\rho P$, for, $\sum_{x\in\Omega} = 1$ as\\ 
$\sigma\cdot{\bf 1} = \rho(P{\bf 1}) = \rho\cdot{\bf 1} = 1$, and no element of $\sigma = \rho P$ can be negative as $P$ is
positive, and $\rho$, being a probability distribution, cannot have any negative element. Therefore, $1$ is the largest real eigenvalue
of $P$. 

We have it from Perron's theorem that the eigenvalue $1$ is of multiplicity one, and it strictly dominates all other eigenvalues.  
Casting $P$ in Jordan canonical form, $P = MJM^{-1}$, $J$ being a Jordan matrix where the first Jordan block
corresponds to the largest eigenvalue of $P$, namely, $1$. Since this eigenvalue is of multiplicity $1$, the first
block of $J$ consists of a single element, namely, $1$. Next, we show that
%
\begin{equation}
\label{j-infinity}
J^{\infty}\defeq\lim_{n\rightarrow\infty}J^n = \left[\begin{array}{cccc}
      1  &0    &\ldots  &0\\ 
      0  &0    &\ldots  &0\\ 
        .    &.     &\ldots  &. \\
      0  &0  &\ldots  &0 
     \end{array}\right]
\end{equation}
That is, $J^\infty$ is a square matrix with only one non-zero element, $1$, which is at the left, top corner.
This is so because as the power $n$ goes to infinity, every Jordan block of $J$, except the first one,
goes to a zero matrix. Proof: consider such a Jordan block $J_i$, corresponding to the eigenvalue $\lambda_i$,
and suppose it is a $k\times k$ matrix. Therefore,
\[J_i = \left[\begin{array}{ccccc}
   \lambda_i &1   &0  &\ldots &0\\
    0   &\lambda_i &1 &\ldots &0\\
    .   &.         &. &\ldots &.\\
    0   &0   &\ldots &\lambda_i &1\\
    0   &0   &\ldots &0   &\lambda_i
\end{array}\right]\]
Now, the matrix $J_i^n$ can be shown\footnote{by induction, alternatively, noting that $J_i = D_i+U_i$, $D_i$ diagonal
and $U_i$ nilpotent, and noting that these two commute.} to be  
\[J_i = \left[\begin{array}{ccccc}
   \lambda_i^n &\binom(n)(1)\lambda_i^{n-1}   &\binom{n}{2}\lambda_i^{n-2}  &\ldots &\binom{n}{k-1}\lambda_i^{n-k+1}\\
    0   &\lambda_i^n &\binom{n}{1}\lambda_i^{n-1} &\ldots &\binom{n}{k-2}\lambda_i^{n-k+2}\\
    .   &.         &. &\ldots &.\\
    0   &0   &\ldots &\lambda_i^n &\binom{n}{1}\lambda_i^{n-1}\\
    0   &0   &\ldots &0   &\lambda_i^n
\end{array}\right]\] 
That is, the first row of $J_i^n$ is the first $k$ terms of the binomial expansion of $(\lambda_i + 1)^n$, and
the other rows are obtained by right shifting the first row successively and placing zeros initially. Perron
guarantees that each $|\lambda_i| < 1$. Therefore, each non-zero element in $J_i$ goes to $0$ as $n$ goes to infinity,
each such term element being a ratio of a polynomial in $n$ to an exponential in $n$. Thus, we have \ref{j-infinity}.

Let us define $P^\infty$ as 
\[\P^\infty\defeq\lim_{n\rightarrow\infty}P^n\]
We note that $J^{\infty}$ is a rank $1$ matrix, therefore $P^{\infty}$, being equal to $MJ^{\infty}M^{-1}$, is also of
rank $1$. Next, we note that $P^{\infty}$ is a stochastic matrix, as $P$ is stochastic, and the product of two stochastic
matrices is also stochastic. Therefore, being stochastic, every row of $P^{\infty}$ is stochastic, and being of rank $1$,
all the rows are identical. Thus, for some probability distribution on $\Omega$, $\pi = [\pi_1 \pi_2 \cdots \pi_{|\Omega|}]$,
\begin{equation}
\label{p-infinity}
P^{\infty} = \left[\begin{array}{cccc}
      \pi_1  &\pi_2    &\ldots  &\pi_{|\Omega|}\\ 
      \pi_1  &\pi_2    &\ldots  &\pi_{|\Omega|}\\  
        .    &.     &\ldots  &. \\
      \pi_1  &\pi_2    &\ldots  &\pi_{|\Omega|} 
     \end{array}\right]
\end{equation}
Next, we show that $\pi$ as above is a stationary distribution of $P$, i.e., $\pi P = P$.\\
\[P^\infty P \defeq \lim_{n\rightarrow\infty}P^n P = \lim_{n\rightarrow\infty}P^{n+1}\defeq P^\infty\]
We have thus $P^\infty P = P^\infty$. Equating the first row of the LHS matrix product with that of the RHS matrix
we get $\pi P = \pi$, establishing $\pi$ as a stationary distribution of $P$.

Next, we see that $\pi$ is the unique stationary distribution of $P$ and the support of $\pi$ is the entire state space.
Repeating an earlier argument, taking the transpose of $\pi P = P$, we see $\pi^T$ to be\footnote{we note again that we are abusing the standard notation in writing $\pi^T$ as a 
column vector.} a (right) eigenvector of $\pi$ which corresponds to the
eigenvalue $1$ of $P^T$. Since $P^T$ has the same spectra as $P$, and as we have proved $1$ to be the largest eigenvalue
of $P$, we have $1$ to be the largest eigenvalue of $P^T$ as well. From Perron's theorem then, since $1$ is of 
multiplicity $1$, $\pi^T$ is, after normalization, the unique eigenvector corresponding to the eigenvalue $1$. So, $\pi$
is the unique stationary distribution of $P$.
Further, Perron's theorem also guarantees 
that $\pi^T$ is positive. Therefore, the support of $\pi$ is the entire state space. Finally, if we run the chain with any 
initial distribution $\sigma$, as $\sigma P^\infty = \pi$, the chain will converge to the unique stationary distribution $\pi$
in the limit. This completes the present proof of the fundamental theorem.   
\end{proof}

\section{Concluding remarks}
An elementary proof need not be simple, and simplicity in many simple proofs is due to the use of some
advanced concepts. The first proof that we have seen in this note is remarkable because it is both
elementary and simple. Moreover, although the result is about a stochastic process, the proof does not use
any probabilistic idea. The key idea that is used is that when we take the dot product
of a row of a stochastic matrix with a column of a positive matrix, we perform a weighted averaging
of the column entries-- the result will be a value between the smallest and the largest entry
of the column, and the betweeness is strict when the stochastic matrix is positive.
The second proof that we have seen makes essential use of probability arguments. Though the presentation
here (following \cite{mixing-book}) is elementary, the proof originally emanates, as noted in \cite{mitzen},
from renewal theory. The third proof rests on Perron's theorem which is usually proved making use of Gelfand's
spectral radius formula, a result from the theory of Banach algebras, though elementary proofs of 
Perron-Frobenius theorem do exist, see, e.g., \cite{suzu}. One may therefore say that the 
three proofs rest on three different intuitions. The hallmark of a great result is that it can be arrived at 
through different points of view-- indeed then, the fundamental theorem of Markov chains possesses this hallmark.  

{\small{\bf Acknowledgements:} I express my indebtedness to all the authors whose proofs I have surveyed in this paper.
I am grateful to Manindra Agrawal, Rajeeva Karandikar, Satyadev Nandakumar, Nandini Nilakantan, and Nisheeth Vishnoi for 
helpful discussions.}


\begin{thebibliography}{\ \ \ \ }
\bibitem[Ba04]{basharin} The Life and Work of A.A. Markov, Gely P. Basharin, Amy N. Langville, and Valery A Naumov,
Linear Algebra and its Applications, Vol 386, pp 3--26, 2004.
\bibitem[Bi95]{bill} {\em Probability and Measure}, Patrick Billingsley, John Wiley, New York
\bibitem[Br\'{e}68]{bre} {\em Markov Chains: Gibbs Fields, Monte-Carlo Simulation, and Queues}, P.Br\'{e}maud, Springer,
New York, 1968.
\bibitem[DDB17]{coupling-paper} A note on faithful coupling of Markov chains, Debajyoti Dey, Pranjal Dutta, and Somenath Biswas,
(unpublished), arXiv:1710.10026v1.[cs.DS], Oct. 2017.
\bibitem[FW12]{freidlin-wentzell} {\em Random Perturbations of Dynamical Systems}, Mark Freidlin and Alexander D. Wentzell, Springer, 2012.
\bibitem[Ha02]{haggstrom} {\em Finite Markov Chains and Algorithmic Applications},
Olle Haggstrom, Cambridge University Press, 2002.
\bibitem[HM18]{hirscher} Segregating Markov Chains, Timo Hirscher and Anderes Martinsson, Jl. of Theoretical Probability, 
Vol 31, pp 1512--1538, 2018.
\bibitem[Kar08]{rajeeva} Notes on Markov Chains, Rajeeva Karandikar, unpublished notes for teaching, 2008.
\bibitem[LPW09]{mixing-book} {\em Markov Chains and Mixing Times}, David A. Levin, Yuval Peres, Elizabeth L. Wilmer,
American Mathematical Society, 2009.
\bibitem[Ma00]{maccluer} The Many Proofs and Applications of Perron's Theorem, C.R. MacCluer, SIAM Review, Vol 42, No. 3, pp 487--498, 2000.
\bibitem[MU05]{mitzen} {\em Probability and Computing: Randomized Algorithms and Probabilistic Analysis}, 
Michael Mitzenmacher, Eli Upfal, Cambridge University Press,
2005.
\bibitem[No97]{norris} {\em Markov Chains}, J.R.Norris, Cambridge University Press, 1997. 
\bibitem[Pa05]{panconesi} The Stationary Distribution of a Markov Chain, Alessandro Panconesi, unpublished note, 2005.
\bibitem[PV16]{nisheeth} Mixing time of Markov chains, dynamical systems and evolution, Ioannis Panageas, Nisheeth K. Vishnoi, ICALP 2016.
\bibitem[Ro97]{rosenthal} Rosenthal, Jeffrey S., Faithful Couplings of Markov Chains: Now Equals Forever, Advances in
Applied Mathematics, Vol. 18, pp. 372--381, 1997.   
\bibitem[Sa06]{saloff-coste} Lectures on finite Markov chains, Laurent Saloff-Coste, Springer Lecture Notes in Mathematics, Vol. 1665, pp 301--413, 2006.
\bibitem[SBK20]{min-wt} Efficacy of the Metropolis Algorithm for the Minimum-Weight Codeword Problem Using Codeword and Generator Search 
Spaces, K.B. Ajitha Shenoy, Somenath Biswas, and Piyush P. Kurur, IEEE Tr. on Evolutionary Computation, Vol 24, Issue 4, 
pp 664--678, 2020. 
\bibitem[Si93]{sinclair} {\em Algorithms for Random Generation and Counting, A Markov Chain Approach}, Alistair Sinclair, Birkhauser, 1993.
\bibitem[SRB10]{biswas} Necessary and Sufficient Conditions for Success of the Metropolis Algorithm for Optimization,
Swagato Sanyal, Raja S, Somenath Biswas, ACM GECCO 2010.
\bibitem[Su83]{suzu} Perron-Frobenius Theorem on non-negative square matrices: an elementary proof,
Kotaro Suzumura, Hitotsubashi Journal of Economics, Vol 24, pp 137--141, 1983.   
\end{thebibliography}
\end{document}